\let\phi\varphi
\newtheorem{prop}{\textsc{Proposition}}[section]
\newtheorem{cor}[prop]{\textsc{Corollary}}
\newtheorem{thm}[prop]{\textsc{Theorem}}
\newtheorem{lemma}[prop]{\textsc{Lemma}}
\newtheorem*{thm*}{\textsc{Theorem}}
\theoremstyle{definition}
\newtheorem{remark}[prop]{\textsc{Remark}}
\newtheorem{defn}[prop]{\textsc{Definition}}
\newcommand{\Z}{\mathbb{Z}}
\newcommand{\Q}{\mathbb{Q}}
\newcommand{\g}{\mathfrak{g}}
\newcommand{\C}{\mathcal{C}}
\newcommand{\lie}{\mathfrak{l}}
\DeclareMathOperator{\Der}{Der}
\DeclareMathOperator{\sDer}{sDer}
\DeclareMathOperator{\tDer}{tDer}
\DeclareMathOperator{\Aut}{Aut}
\DeclareMathOperator{\lk}{lk}
\DeclareMathOperator{\Diffeo}{Diffeo}
\DeclareMathOperator{\Fix}{Fix}
\let\epsilon\varepsilon
\title[Solvability of concordance groups and Milnor invariants]{Solvability of concordance groups and Milnor invariants}
\author[]{Alessio Di Prisa, Giovanni Framba}
\begin{document}
\maketitle
\begin{abstract}
Using Milnor invariants, we prove that the concordance group $\C(2)$ of $2$-string links is not solvable. As a consequence we prove that the equivariant concordance group of strongly invertible knots is also not solvable, and we answer \cite[Conjecture 1.3]{kuzbary2023note}.
\end{abstract}

\section{Introduction}

Recall that a strongly invertible knot is given by a pair $(K,\rho)$, where $K \hookrightarrow S^3$ is a knot and $\rho:S^3 \rightarrow S^3$ is a smooth orientation preserving involution such that $\rho(K) = K$ and $\rho$ reverses the orientation on $K$.
In \cite{sakuma}, Sakuma introduced the definitions of equivariant connected sum and equivariant concordance for (directed) strongly invertible knots. Using this notions, he defines the group $\widetilde{\C}$ of equivariant concordance classes of strongly invertible knots.
Following Sakuma, strongly invertible knots have been a classic object of study. Recently, there has been a strong new interest in this topic \cite{collari2023strongly,hirasawa2022invariant,boyle2023classification,lobb2021refinement,lipshitz2022khovanov}, and in particular towards the side of equivariant concordance \cite{alfieri2021strongly,boyle2021equivariant,dai_mallick_stoffregen,di2023new, di2023equivariant,miller_powell}.
However, very little is known concerning the group structure of $\widetilde{\C}$, and even if the first author proved in \cite{diprisa} that $\widetilde{\C}$ is not abelian, a lot of questions remain open. For instance, Alfieri and Boyle \cite{alfieri2021strongly} conjecture that the equivariant concordance group contains a non-abelian free group.

In this paper we push the investigation about how far $\widetilde{\C}$ is from being abelian a step further. In fact, in Section~\ref{sec:solvab} we prove the following:

\vspace{3.5mm}
\noindent{\bf Theorem \ref{thm:main}} {\it The equivariant concordance group of strongly invertible knots $\widetilde{\C}$
     is not solvable.}
\vspace{3.5mm}

The proof of Theorem \ref{thm:main} relies on the relation between strongly invertible knots, \emph{theta curves} and \emph{string links}, which we outline in Section \ref{sec:string}.
Recall that a theta curve is the embedding in $S^3$ of a graph with two vertices and tree edges joining them. An $n$-string link is (the image of) a proper embedding $\sigma$ of $\coprod_{k=1}^n I_i$, the disjoint union of $n$ copies of the interval $I=[0,1]$, in $D^2\times I$ such that $\sigma|_{I_i}(j)\in D^2\times\{j\}$ for each $i$ and for $j=0,1$.
Analogously to the knot case, it is possible to give natural definitions of sum and cobordism/concordance for theta curves and string links. The set of cobordism classes of theta curves forms a group denoted by $\Theta$, while $n$-string links up to concordance form a group denoted by $\C(n)$.
One can easily check that $\C(1)$ coincides with the knot concordance group, and hence it is an abelian group.
As shown by Le Dimet \cite{le1988cobordisme} the pure braid group $\mathcal{P}(n)$ naturally injects in $\C(n)$. Since for $n\geq3$ $\mathcal{P}(n)$ contains a non-abelian free subgroup, we easily get that $\C(n)$ in not abelian and in particular not solvable for $n\geq 3$. The same argument does not work in the case $n=2$, since $\mathcal{P}(2)\cong\Z$ is a central subgroup of $\C(2)$.
De Campos \cite{de2005boundary} proved that $\Theta$ and $\C(2)$ are related by the following theorem.

\vspace{3.5mm}
\noindent{\bf Theorem \ref{thm:garoto}} \cite[Proposition 2]{de2005boundary} {\it There is a split extension of groups:
  \[ 1 \rightarrow \mathcal{P}(2) \rightarrow \C(2) \rightarrow \Theta \rightarrow 1. \]
  }

Miyazaki in \cite{miyazaki} provided a proof that the group $\Theta$ is not commutative (which would imply that $\C(2)$ is not abelian), appealing on a result of Gilmer \cite{gilmer}. However, Friedl \cite{friedl} found gaps in the proof of the result in \cite{gilmer}.
Another proof of the fact that $\C(2)$ is not abelian can be found in \cite[Theorem 1.8]{meilhan2014abelian}.
In \cite[Theorem 1.1]{kuzbary2023note} the author proves that for all $n\geq 2$ the group $\C(n) / \langle\langle \mathcal{P} \rangle \rangle$ is not abelian and conjectures that it is not solvable either \cite[Conjecture 1.3]{kuzbary2023note}.

Using Milnor invariants \cite{milnor1954isotopy,habegger1998link}, we prove in Theorem \ref{thm:Cu_no_sol} that a certain subgroup $\C_u(2)$ (see Section \ref{sec:solvab}) of $\C(2)$ is not solvable. From this result we deduce Theorem~\ref{thm:main}.
Moreover, the non-solvability of $\C_u(2)$ implies the following theorem, which answers the question posed by Kuzbary.

\vspace{3.5mm}
\noindent{\bf Theorem \ref{thm:main.alpha}} {\it The quotient group $\C(n) / \langle\langle \mathcal{P}(n) \rangle\rangle$, of the $n$-strands string links over the normal closure of the pure $n$-braids subgroup, is not solvable for any $n\geq 2$.
In particular, the cobordism group of theta curves $\Theta\cong\C(2) / \mathcal{P}(2)$ is not solvable.}
\vspace{3.5mm}

\subsection*{Content of the paper}
In Section~\ref{sec:GrpLie} we provide the basic notions and definitions regarding the algebraic tools we need in Section~\ref{sec:prova}.   
Section~\ref{sec:string} contains a recap on strongly invertible knots and string links. Moreover, we recall the definition of Milnor invariants.
In Section~\ref{sec:solvab} we show how Theorem~\ref{thm:main} and Theorem~\ref{thm:main.alpha} are implied by Theorem~\ref{thm:Cu_no_sol}. 
Lastly, Section~\ref{sec:prova} is devoted to the proof of Theorem~\ref{thm:Cu_no_sol}.

\section{Groups and Lie algebras}\label{sec:GrpLie}
In this section we introduce the notation and the preliminary results that we will need in Section \ref{sec:prova}. For details on filtrations see \cite[Section 1]{darne2020braids}.
\begin{defn}
Let $G$ be a group. A \emph{filtration} $G_*=\{G_k\}_{k\geq1}$ on $G$ is a sequence of subgroups of $G$ such that
\begin{itemize}
    \item $G_1=G$,
    \item $G_{k+1}\subset G_{k}$,
    \item $[G_i,G_j]\subset G_{i+j}$.
\end{itemize}
\end{defn}

\begin{defn}
The \emph{graded Lie ring} associated with a filtration $G_*$ on a group $G$ is defined as the graded group
$$
\mathcal{L}(G_*)=\bigoplus_{k\geq 1}G_k/G_{k+1}
$$
with the Lie bracket induced by the commutator operation on $G$.
\end{defn}

\begin{defn}
Let $G_*$ and $H_*$ be filtrations on $G$ and $H$ respectively. An \emph{action} of $G_*$ on $H_*$ is a homomorphism $\phi:G\longrightarrow\Aut(H)$ such that for all $i,j\geq1$, $[G_i,H_j]\subset H_{i+j}$, where $[g,h]:=\phi(g)(h)h^{-1}$, for $g\in G$, $h\in H$.
\end{defn}

Given an action $\phi:G\longrightarrow \Aut(H)$ of $G_*$ on $H_*$, we have an induced homomorphism of Lie rings
$$
J_\phi:\mathcal{L}(G_*)\longrightarrow\Der(\mathcal{L}(H_*))
$$
$$
\overline{g}\longmapsto [g,-]
$$
called the \emph{Johnson homomorphism}, where $\Der(\mathcal{L}(H_*))$ is the Lie ring of derivations on $\mathcal{L}(H_*)$.

\begin{lemma}\cite[Proposition-definition 1.8]{darne2020braids}\label{lemma:induced_filtration}
Let $\phi:G\longrightarrow\Aut(H)$ be a homomorphism and let $H_*$ be a filtration on $H$. Then, the filtration $G_*$ given by
$$
G_i=\{g\in G\;|\;\forall j\geq1,\; [g,H_j]\subset H_{i+j}\}
$$
is the unique maximal filtration on some subgroup $G_1\subset G$ such that $\phi_{|G_1}:G_1\longrightarrow\Aut(H)$ is an action of $G_*$ on $H_*$.
\end{lemma}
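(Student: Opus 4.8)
The plan is to reduce everything to ordinary commutator calculus by passing to the semidirect product $\Gamma = H \rtimes_\phi G$, in which the bracket $[g,h] = \phi(g)(h)h^{-1}$ appearing in the statement is literally the group commutator $ghg^{-1}h^{-1}$, and in which $G$, $H$, and all the terms $H_k$ sit as genuine subgroups. In this language the defining condition reads $G_i = \{g \in G : [g, H_j] \subseteq H_{i+j}\text{ for all }j\geq 1\}$, where the brackets are now honest commutator subgroups of $\Gamma$.

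First I would verify that each $G_i$ is a subgroup and that $G_{k+1}\subseteq G_k$. This is cleanest at the level of $\Aut(H)$: the sets $A_i := \{\alpha \in \Aut(H) : \alpha(h)h^{-1}\in H_{i+j}\text{ for all }j\geq1,\ h\in H_j\}$ form a descending chain of subgroups of the group of filtration-preserving automorphisms, and $G_i = \phi^{-1}(A_i)$. That $A_i$ is closed under composition follows from the identity $\alpha\beta(h)h^{-1} = \alpha\!\left(\beta(h)h^{-1}\right)\cdot\bigl(\alpha(h)h^{-1}\bigr)$, combined with the filtration property $[A_i, H_{i+j}]\subseteq H_{2i+j}\subseteq H_{i+j}$; closure under inverse follows from the analogous identity for $\alpha^{-1}$, once one checks that each element of $A_1$ acts as a \emph{filtered} automorphism, so that its inverse again preserves every $H_m$. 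The nesting $G_{k+1}\subseteq G_k$ is immediate from $H_{k+1+j}\subseteq H_{k+j}$.

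The heart of the argument is the bracket axiom $[G_i, G_j]\subseteq G_{i+j}$, which I would prove via the three-subgroup lemma (Hall--Witt). Unwinding the definition, it suffices to show $\bigl[[G_i, G_j], H_k\bigr]\subseteq H_{i+j+k}$ for every $k\geq1$. The three-subgroup lemma bounds this commutator in terms of $\bigl[[G_j, H_k], G_i\bigr]$ and $\bigl[[H_k, G_i], G_j\bigr]$; since $[G_j, H_k]\subseteq H_{j+k}$ and $[H_k, G_i] = [G_i, H_k]\subseteq H_{i+k}$ by the very definition of $G_i$ and $G_j$, both iterated commutators land in $H_{i+j+k}$, and the axiom follows. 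I expect this to be the main obstacle: the delicate point is verifying the normality hypotheses required to apply Hall--Witt, which is exactly where one uses that the relevant terms $H_m$ are normalized by the $G_i$ because their elements act as filtration-preserving automorphisms.

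Finally, the restriction $\phi|_{G_1}$ is an action essentially by construction, since $[G_i, H_j]\subseteq H_{i+j}$ is the defining property of $G_i$. For maximality and uniqueness I would argue that if $\widetilde{G}_*$ is \emph{any} filtration on a subgroup $\widetilde{G}_1\subseteq G$ for which $\phi|_{\widetilde{G}_1}$ is an action, then the action condition $[\widetilde{G}_i, H_j]\subseteq H_{i+j}$ for all $j$ says precisely that $\widetilde{G}_i\subseteq G_i$ for every $i$. Thus $G_*$ dominates every competitor termwise; being itself such a filtration by the previous steps, it is the unique maximal one, which is the assertion of the lemma.
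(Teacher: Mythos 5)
Your proof follows what is almost certainly the ``intended'' argument: the paper gives no proof of Lemma~\ref{lemma:induced_filtration} at all, quoting it from Darn\'e, and the proof in that reference is precisely the passage to the semidirect product $H\rtimes_\phi G$ combined with the three subgroup lemma. Your Hall--Witt step is sound once the normality hypotheses are checked, and they do hold: each $H_m$ is normal in $H$ because $[H_1,H_m]\subseteq H_{m+1}\subseteq H_m$, and each $g\in G_1$ satisfies $gH_mg^{-1}=\phi(g)(H_m)=H_m$, so $H_{i+j+k}$ is normal in $\langle G_i,G_j,H_k\rangle$; then $[[G_j,H_k],G_i]\subseteq[G_i,H_{j+k}]\subseteq H_{i+j+k}$ and symmetrically, giving $[[G_i,G_j],H_k]\subseteq H_{i+j+k}$, i.e. $[G_i,G_j]\subseteq G_{i+j}$. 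The maximality and uniqueness paragraph is exactly right: for any competitor $\widetilde G_*$ the action condition $[\widetilde G_i,H_j]\subseteq H_{i+j}$ literally reads $\widetilde G_i\subseteq G_i$, and since $G_*$ is itself such a filtration it is the unique maximal one.

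The one point where your justification is not yet a proof is closure of $A_i$ (equivalently $G_i$) under inversion. You reduce it to the claim that an element of $A_1$ ``acts as a filtered automorphism, so that its inverse again preserves every $H_m$'' --- but merely preserving each $H_m$ does not imply that the inverse does: an automorphism of $H$ can map $H_m$ properly into itself. What saves the argument is the stronger congruence $\alpha(h)h^{-1}\in H_{m+1}$ for $h\in H_m$, which forces $\alpha(H_m)=H_m$. Indeed, let $h\in H_j$ and $k=\alpha^{-1}(h)$; if $k\notin\bigcap_m H_m$, let $m$ be maximal with $k\in H_m$ and suppose $m<j$. Then $k=\left(\alpha(k)k^{-1}\right)^{-1}\alpha(k)\in H_{m+1}\cdot H_j\subseteq H_{m+1}$, contradicting maximality; hence $k\in H_j$ (the case $k\in\bigcap_m H_m$ being trivial). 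Once $\alpha^{-1}(H_j)\subseteq H_j$ is known, $\alpha^{-1}(h)h^{-1}=\left(\alpha(k)k^{-1}\right)^{-1}\in H_{i+j}$ for $\alpha\in A_i$, as you intend. With this detail supplied the proof is complete.
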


\begin{lemma}\label{lemma:group_solv}
Let $G_*$ be a filtration on a group $G$. If $G$ is a solvable group then $\mathcal{L}(G_*)$ is a solvable Lie ring.
\begin{proof}
We denote by $G^{(n)}$ the derived series of $G$, and we denote $G^{(n)} \cap G_*$ the filtration on $G^{(n)}$ induced by the one on $G$.
We begin by showing that $\mathcal{L}(G_*)^{(1)} = [\mathcal{L}(G_*),\mathcal{L}(G_*)]$ is contained in $\mathcal{L}(G^{(n)} \cap G_*)$.
Fix a degree $k\geq 1$ of the filtration, then by definition:
\begin{align*}
    [\mathcal{L}(G_*),\mathcal{L}(G_*)]_k &\subseteq \bigplus_{i+j=k} 
    \left[ G_i/G_{i+1}, G_j/G_{j+1} \right]\\
    &\subseteq \frac{\langle\, [G_i,G_j] \;|\; i+j=k \,\rangle}{G_{k+1} \cap G^{(1)}}  \\
    &\subseteq \frac{G_k \cap G^{(1)}}{G_{k+1} \cap G^{(1)}}.
\end{align*} 
By iteration we obtain:
\[ \mathcal{L}(G_*)^{(n)} \subseteq \mathcal{L}(G^{(n)} \cap G_*). \]
Since $G$ is solvable, for a large enough $n$ the right hand side will be zero, hence the thesis.
\end{proof}
\end{lemma}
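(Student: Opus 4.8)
The plan is to transfer solvability from the group $G$ to the Lie ring $\mathcal{L}(G_*)$ by comparing their derived series through the filtration. Concretely, for each $n$ I would equip the $n$-th derived subgroup $G^{(n)}$ with the induced filtration $G^{(n)}\cap G_*$, defined by $(G^{(n)}\cap G_*)_k=G^{(n)}\cap G_k$; one checks immediately that this satisfies the three filtration axioms, since $G^{(n)}$ is a characteristic subgroup and the bracket condition $[G_i,G_j]\subset G_{i+j}$ restricts to it. The goal is then the inclusion $\mathcal{L}(G_*)^{(n)}\subseteq\mathcal{L}(G^{(n)}\cap G_*)$, where $\mathcal{L}(G_*)^{(n)}$ denotes the $n$-th term of the derived series of the Lie ring.

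The heart of the argument is the base case $[\mathcal{L}(G_*),\mathcal{L}(G_*)]\subseteq\mathcal{L}(G^{(1)}\cap G_*)$, after which one iterates by applying the same inclusion to the filtered group $(G^{(n)},\,G^{(n)}\cap G_*)$. I would argue degree by degree: the degree-$k$ component of $[\mathcal{L}(G_*),\mathcal{L}(G_*)]$ is generated by brackets of homogeneous classes coming from $G_i/G_{i+1}$ and $G_j/G_{j+1}$ with $i+j=k$, and such a bracket is represented by a commutator $[g_i,g_j]$ with $g_i\in G_i$ and $g_j\in G_j$. This commutator lies in $[G_i,G_j]\subseteq G_{i+j}=G_k$ by the filtration axiom, and simultaneously in the derived subgroup $G^{(1)}=[G,G]$; hence its class lands in $(G_k\cap G^{(1)})/(G_{k+1}\cap G^{(1)})$, which is exactly the degree-$k$ part of $\mathcal{L}(G^{(1)}\cap G_*)$.

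Finally, solvability of $G$ gives $G^{(n)}=\{1\}$ for some large $n$, so the induced filtration on $G^{(n)}$ is trivial and $\mathcal{L}(G^{(n)}\cap G_*)=0$; the established inclusion then forces $\mathcal{L}(G_*)^{(n)}=0$, which is precisely solvability of the Lie ring. The main obstacle I anticipate is essentially bookkeeping: one must be careful that the derived series of the Lie ring is correctly identified with a graded subobject of $\mathcal{L}(G_*)$ and that the recursion is legitimate, i.e.\ that feeding the filtered subgroup $G^{(n)}$ back into the base-case inclusion genuinely produces the next derived layer. Verifying that $G^{(n)}\cap G_*$ is a bona fide filtration, so that its associated Lie ring is defined, is the other point requiring care, though it is routine.
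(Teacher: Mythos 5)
Your proposal is correct and follows essentially the same route as the paper's proof: equip each derived subgroup $G^{(n)}$ with the induced filtration $G^{(n)}\cap G_*$, establish the degree-by-degree inclusion $[\mathcal{L}(G_*),\mathcal{L}(G_*)]_k\subseteq (G_k\cap G^{(1)})/(G_{k+1}\cap G^{(1)})$, iterate, and conclude from $G^{(n)}=\{1\}$. The bookkeeping points you flag (that $G^{(n)}\cap G_*$ is a genuine filtration and that the recursion is legitimate) are exactly the ones the paper also leaves as routine verifications.
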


\begin{remark}
Let $\mathfrak{g}=\bigoplus_{k}\mathfrak{g}_k$ be a graded Lie algebra. Then the degree completion of $\g$ is given by
$$
\overline{\g}=\prod_{k\geq 1}\g_k.
$$
\end{remark}

\begin{lemma}\label{lemma:solv_completion}
Let $\g$ be a graded Lie algebra. Then $\g$ is solvable if and only if its degree completion $\overline{\g}$ is solvable.
\begin{proof}
    We have that $\g \subseteq \overline{\g}$, so if $\g$ is not solvable, $\overline{\g}$ cannot be solvable.
    Viceversa, assume that $\overline{\g}$ is not solvable and consider, for all $n\in\mathbb{N}$, the ideals $I_n=\bigoplus_{k\geq n} \g_k$ and $\overline{I}_n = \prod_{k\geq n} \overline{\g}_k$.  

    Firstly, we want to prove that, for $n$ large enough, the quotient algebra $\overline{\g} / \overline{I}_n$ is not solvable.
    Let $\pi_n:\overline{\g} \rightarrow \overline{\g} / \overline{I}_n$ be the projection. Denote by $\mathfrak{h}^{(n)}$ the $n$-th term of the derived series of a Lie algebra $\mathfrak{h}$. We have that for all $k$ and $n$: 
    \[ \pi_n\left(\overline{\g}^{(k)}\right)=\left( \overline{\g} / \overline{I}_n \right)^{(k)}.  \]
    Since $\overline{\g}$ is not solvable, for all $k\geq 1$ there exist an $x_k\in\overline{\g}^{(k)}$, $x\neq 0$. Let $n_0$ be the degree of the first non-zero component of $x$.
    We have that $\pi_{n_0+1}(x)\neq 0$, it follows that $\overline{\g} / \overline{I}_{n_0+1}$ is not solvable.
    Observe that $\overline{\g} / \overline{I}_n = \g / I_n$, hence $\g$ is not solvable, since it has a non-solvable quotient.
\end{proof}
\end{lemma}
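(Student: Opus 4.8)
The plan is to compare the derived series of $\g$ and of its completion $\overline{\g}$ degree by degree, exploiting that both carry a grading.

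First I would dispose of the easy implication. On $\overline{\g}=\prod_{k\geq1}\g_k$ the bracket is computed componentwise by the finite sums $[x,y]_k=\sum_{i+j=k}[x_i,y_j]$, and this restricts to the original bracket on the subspace $\g=\bigoplus_{k\geq1}\g_k$ of finitely supported elements. Hence $\g$ is a Lie subalgebra of $\overline{\g}$, so $\g^{(n)}\subseteq\overline{\g}^{(n)}$ for every $n$; if $\overline{\g}$ is solvable, then $\overline{\g}^{(n)}=0$ for some $n$, whence $\g^{(n)}=0$ and $\g$ is solvable.

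For the converse I would first record that the derived series of a graded Lie algebra is again graded: since $[\g_i,\g_j]\subseteq\g_{i+j}$, the subalgebra $\g^{(1)}=[\g,\g]$ is spanned by homogeneous brackets and satisfies $(\g^{(1)})_k=\sum_{i+j=k}[\g_i,\g_j]$, so by iteration each $\g^{(n)}$ is a graded ideal with $(\g^{(n+1)})_k=\sum_{i+j=k}[(\g^{(n)})_i,(\g^{(n)})_j]$. The heart of the argument is then the claim that the degree-$k$ component of $\overline{\g}^{(n)}$ lies in $(\g^{(n)})_k$, which I would prove by induction on $n$. The base case $n=0$ is the identity $\overline{\g}_k=\g_k$. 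For the inductive step, every generator of $\overline{\g}^{(n+1)}$ is a bracket $[x,y]$ with $x,y\in\overline{\g}^{(n)}$, and its degree-$k$ component $\sum_{i+j=k}[x_i,y_j]$ lands in $\sum_{i+j=k}[(\g^{(n)})_i,(\g^{(n)})_j]=(\g^{(n+1)})_k$ by the inductive hypothesis and the graded structure just described; passing to linear combinations preserves this containment.

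To conclude, if $\g$ is solvable then $\g^{(n)}=0$ for some $n$, so $(\g^{(n)})_k=0$ for all $k$, and the claim forces every homogeneous component of $\overline{\g}^{(n)}$ to vanish, i.e.\ $\overline{\g}^{(n)}=0$; thus $\overline{\g}$ is solvable, completing the equivalence. The step I expect to be the main obstacle is exactly the containment of the (linear-span) derived subalgebra of the infinite product $\overline{\g}$ inside the product of the graded pieces of $\g^{(n)}$. This is what could fail for an arbitrary non-graded completion, but here it is handled cleanly by the componentwise induction above, since each bracket in $\overline{\g}$ is evaluated degree-by-degree as a finite sum and no convergence or infinite-support phenomena intervene.
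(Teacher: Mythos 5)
Your proof is correct, but it takes a genuinely different route from the paper's. The easy direction is the same. For the substantive direction the paper argues by contraposition through truncation quotients: assuming $\overline{\g}$ is not solvable, it finds a nonzero element of $\overline{\g}^{(k)}$ that survives in a finite truncation $\overline{\g}/\overline{I}_n=\g/I_n$, using that surjections carry derived series onto derived series, and concludes that $\g$ has a non-solvable quotient. You instead prove the implication directly: you first verify that each term $\g^{(n)}$ of the derived series is a graded ideal with $(\g^{(n+1)})_k=\sum_{i+j=k}[(\g^{(n)})_i,(\g^{(n)})_j]$, and then show by induction on $n$ that the degree-$k$ component of every element of $\overline{\g}^{(n)}$ lies in $(\g^{(n)})_k$, which is legitimate because the bracket in $\overline{\g}$ is evaluated degree by degree as a finite sum (finiteness coming from the grading starting in degree $1$); hence $\g^{(n)}=0$ forces $\overline{\g}^{(n)}=0$. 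Both arguments ultimately rest on the same observation that brackets in the completion are computed componentwise, but yours avoids quotients and the identity $\pi_n\bigl(\overline{\g}^{(k)}\bigr)=\bigl(\overline{\g}/\overline{I}_n\bigr)^{(k)}$ entirely, and it sidesteps the bookkeeping in the paper over which truncation index works for which term of the derived series. The modest price is the explicit (easy) check that the derived series of $\g$ is graded, which the paper's route does not require.
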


In the following we will denote by $\lie_2$ the free graded Lie ring on two generators $X$ and $Y$ in degree 1.

\begin{defn}\label{def:sder}
We say that a derivation $D\in\Der(\lie_2)$ is \emph{tangential} if there exist $U,V\in\lie_2$ such that
$$
D(X)=[U,X]\qquad D(Y)=[V,Y].
$$
A tangential derivation $D$ is called \emph{special} (or \emph{normalized}) if $D(X+Y)=0$.
We will denote by $\tDer(\lie_2)$ and $\sDer(\lie_2)$ the sets of tangential derivations and special derivation respectively. It is not difficult to check that $\tDer(\lie_2)$ and $\sDer(\lie_2)$ are Lie subrings of $\Der(\lie_2)$.

In a similar way one can define the subalgebra of tangential and special derivation of the Lie algebra $\lie_2^\Q$ and of its degree completion $\overline{\lie_2^\Q}$ (see also \cite[Section 3]{alekseev2012kashiwara}).
\end{defn}

Observe that since $\lie_2$ is graded, the Lie ring $\sDer(\lie_2)$ naturally inherits a graded structure.

\begin{lemma}\label{lemma:sder_tensor}
The following map induces an isomorphism of graded Lie rings
$$
\sDer(\lie_2)\otimes\Q\longrightarrow\sDer(\lie_2^\Q)
$$
$$
D\otimes p/q\longmapsto p/q\cdot D.
$$
\end{lemma}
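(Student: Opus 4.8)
The plan is to prove the statement one graded degree at a time, exploiting that $\lie_2$ is a \emph{free} Lie ring and that base change along the flat extension $\Z\hookrightarrow\Q$ commutes with the formation of kernels and images. First I would record the identification $\lie_2^\Q\cong\lie_2\otimes\Q$ of graded Lie algebras: since $\lie_2$ is the free Lie ring on $X,Y$, each homogeneous component $(\lie_2)_d$ is a finitely generated free abelian group (of rank given by Witt's formula), and tensoring with $\Q$ yields the free Lie algebra $\lie_2^\Q$ by the universal property. Consequently a degree-$d$ derivation of $\lie_2$ is freely determined by the pair $(D(X),D(Y))\in(\lie_2)_{d+1}^{\oplus 2}$, so that $\Der(\lie_2)_d\cong(\lie_2)_{d+1}^{\oplus 2}$ is finitely generated free abelian and $\Der(\lie_2^\Q)_d\cong\Der(\lie_2)_d\otimes\Q$.

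Next I would realize $\sDer(\lie_2)_d$ as the image of a kernel under $\Z$-linear maps of finitely generated free abelian groups. Define
\[
\psi_d\colon(\lie_2)_d^{\oplus 2}\longrightarrow\Der(\lie_2)_d,\qquad (U,V)\longmapsto D_{U,V},
\]
where $D_{U,V}(X)=[U,X]$ and $D_{U,V}(Y)=[V,Y]$, together with
\[
\beta_d\colon(\lie_2)_d^{\oplus 2}\longrightarrow(\lie_2)_{d+1},\qquad (U,V)\longmapsto[U,X]+[V,Y].
\]
Since $D_{U,V}(X+Y)=\beta_d(U,V)$, Definition~\ref{def:sder} gives exactly $\sDer(\lie_2)_d=\psi_d(\ker\beta_d)$. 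Note that $\psi_d$ fails to be injective only in degree $1$, its kernel being the centralizer contributions $\{(U,V):[U,X]=[V,Y]=0\}$ (which reduce to $\Z X\oplus\Z Y$ in degree $1$ and vanish for $d\geq 2$); this causes no trouble in what follows.

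The key step is then purely homological. The identical formulas define $\Q$-linear maps $\psi_d^\Q$ and $\beta_d^\Q$ for $\lie_2^\Q$, and under the identifications above these are precisely $\psi_d\otimes\id_\Q$ and $\beta_d\otimes\id_\Q$. As $\Q$ is flat over $\Z$, the functor $-\otimes\Q$ is exact and commutes with kernels and images, whence
\[
\sDer(\lie_2^\Q)_d=\psi_d^\Q(\ker\beta_d^\Q)=\bigl(\psi_d(\ker\beta_d)\bigr)\otimes\Q=\sDer(\lie_2)_d\otimes\Q,
\]
all taking place inside $\Der(\lie_2^\Q)_d=\Der(\lie_2)_d\otimes\Q$. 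Finally, the explicit map $D\otimes p/q\mapsto(p/q)\cdot D$ visibly implements this identification in each degree, is degree-preserving, and satisfies $[(p/q)D_1,(r/s)D_2]=(pr/qs)[D_1,D_2]$, so it is a homomorphism of graded Lie rings; being a bijection in every degree, it is the asserted isomorphism.

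I expect the only delicate point to be the bookkeeping around the non-injectivity of $\psi_d$ in the bottom degree (the centralizers of $X$ and $Y$), together with the verification that every map in sight is genuinely base-changed from its integral version. Once these compatibilities are in place, exactness of $-\otimes\Q$ does all the remaining work, and the grading guarantees that a degreewise isomorphism assembles into an isomorphism of graded Lie rings.
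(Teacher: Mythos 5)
Your proof is correct, but it takes a genuinely different route from the paper's. The paper's argument is a direct construction of the inverse map: given $D\in\sDer(\lie_2^\Q)$ with $D(X)=[U,X]$ and $D(Y)=[V,Y]$, one clears denominators to find $n$ with $nU,nV\in\lie_2$, observes that $nD$ then restricts to a special derivation of $\lie_2$, and sends $D\mapsto nD\otimes 1/n$ (leaving implicit the check that this is independent of the choice of $n$ and of $U,V$). You never write down an inverse: instead you identify $\sDer(\lie_2)_d$ degreewise as the image $\psi_d(\ker\beta_d)$ inside the finitely generated free abelian group $\Der(\lie_2)_d\cong(\lie_2)_{d+1}^{\oplus 2}$, and let flatness of $\Q$ over $\Z$ commute $-\otimes\Q$ past kernels and images. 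Both proofs ultimately rest on the same fact --- every object in sight is torsion-free and rational data becomes integral after scaling --- but your packaging absorbs the well-definedness and injectivity verifications that the paper's explicit inverse glosses over, at the cost of the extra bookkeeping with $\psi_d$ and $\beta_d$. One small point you should make explicit: the identity $\sDer(\lie_2)_d=\psi_d(\ker\beta_d)$ uses that for a homogeneous degree-$d$ tangential derivation the elements $U,V$ may be chosen homogeneous of degree $d$ (compare homogeneous components of $[U,X]=D(X)$, which is concentrated in degree $d+1$); this is immediate but it is the step that legitimizes your degreewise description, and the same remark is needed to see that $\sDer(\lie_2)$ really is the direct sum of its homogeneous pieces.
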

\begin{proof}
It is not difficult to check that the inverse of the map above can be constructed as follows. Let $D\in\sDer(\lie_2^\Q)$ and let $U,V\in\lie_2^\Q$ such that $D(X)=[U,X]$ and $D(Y)=[V,Y]$.
Then there exists $n$ large enough so that $nU,nV\in\lie_2\subset\lie_2^\Q$. Then clearly $nD$ restricts to a special derivation of $\lie_2$, and we get the inverse map as
$$
\sDer(\lie_2^\Q)\longmapsto\sDer(\lie_2)\otimes\Q
$$
$$
D\longmapsto nD\otimes 1/n.
$$
\end{proof}

\begin{lemma}\label{lemma:sder_completion}
The degree completion $\overline{\sDer(\lie_2^\Q)}$ of the special derivations of $\lie_2^\Q$ is naturally isomorphic to $\sDer(\overline{\lie_2^\Q})$.
\end{lemma}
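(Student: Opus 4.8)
The plan is to exhibit an explicit natural map and check that it is a graded-compatible Lie ring isomorphism degree by degree. Write $L:=\lie_2^{\Q}$ and $\overline L := \overline{\lie_2^{\Q}}=\prod_{k\geq 1}L_k$, and recall that a special derivation $D$ of $L$ is determined by its values $D(X)=[U,X]$, $D(Y)=[V,Y]$ on the generators, subject to $[U,X]+[V,Y]=0$. Such a $D$ is homogeneous of degree $k$ precisely when $U,V$ can be taken in $L_k$, so that $\sDer(L)=\bigoplus_{k\geq 1}\sDer(L)_k$ and $\overline{\sDer(L)}=\prod_{k\geq 1}\sDer(L)_k$. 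The natural map
\[
\Phi\colon \overline{\sDer(L)}\longrightarrow \sDer(\overline L),\qquad (D_k)_{k}\longmapsto \widehat D,
\]
sends a sequence of homogeneous special derivations to the derivation $\widehat D$ of $\overline L$ acting by $\widehat D(w)=\sum_k D_k(w)$. First I would check this is well defined: since each $D_k$ raises degree by $k$, the degree-$m$ component of $\widehat D(w)$ is the finite sum $\sum_{k<m}D_k(w_{m-k})$, so $\widehat D$ lands in $\overline L$; moreover $\widehat D(X)=[\widehat U,X]$ and $\widehat D(Y)=[\widehat V,Y]$ with $\widehat U=\sum_k U_k$, $\widehat V=\sum_k V_k\in\overline L$, and $\widehat D(X+Y)=\sum_k D_k(X+Y)=0$, so $\widehat D$ is indeed special.

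Next I would verify that $\Phi$ is a Lie ring homomorphism and is injective. Both the bracket on $\overline{\sDer(L)}$ and the commutator on $\sDer(\overline L)$ are computed degreewise, so evaluating $[\Phi((D_k)),\Phi((E_l))]$ and $\Phi([(D_k),(E_l)])$ on a homogeneous element both give $\sum_{k,l}[D_k,E_l]$, whence $\Phi$ is a homomorphism. For injectivity, suppose $\widehat D=0$. Then $0=\widehat D(X)=\sum_k[U_k,X]$, and since $[U_k,X]\in L_{k+1}$ lies in pairwise distinct degrees, each $[U_k,X]=D_k(X)$ vanishes; likewise each $D_k(Y)=0$. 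A derivation that kills the generators is zero, so every $D_k=0$.

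The main point is surjectivity, where I would use that a special derivation of $\overline L$ is automatically continuous. Given $E\in\sDer(\overline L)$ with $E(X)=[\widehat U,X]$, $E(Y)=[\widehat V,Y]$ and $E(X+Y)=0$, decompose $\widehat U=\sum_k U_k$ and $\widehat V=\sum_k V_k$ into homogeneous parts and let $D_k$ be the derivation of $L$ defined by $D_k(X)=[U_k,X]$, $D_k(Y)=[V_k,Y]$. The relation $0=E(X+Y)=\sum_k\big([U_k,X]+[V_k,Y]\big)$ again has summands in distinct degrees $L_{k+1}$, forcing $[U_k,X]+[V_k,Y]=0$; hence each $D_k$ is a homogeneous special derivation of degree $k$ and $(D_k)_k\in\overline{\sDer(L)}$. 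Finally $\Phi((D_k))$ and $E$ agree on $X$ and $Y$; since $\widehat U,\widehat V\in\overline L$ have no degree-$0$ part, both $E$ and $\Phi((D_k))$ raise the degree filtration $\{\prod_{k\geq n}L_k\}_n$ by at least one and are therefore continuous, so being derivations agreeing on the topological generators $X,Y$ they coincide on all of $\overline L$. This gives $\Phi((D_k))=E$ and completes the proof; the one step requiring care is precisely this automatic continuity, which is what lets us recover a completed derivation from its values on the generators.
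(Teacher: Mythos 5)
Your proof is correct and follows essentially the same route as the paper's: sum the homogeneous special derivations $D_k$ to obtain a derivation of $\overline{\lie_2^\Q}$, and invert by decomposing $U$ and $V$ into homogeneous components and extracting $[U_i,X]+[V_i,Y]=0$ degree by degree from $D(X+Y)=0$. The only substantive difference is that you explicitly address why a special derivation of $\overline{\lie_2^\Q}$ is determined by its values on the topological generators $X$ and $Y$ (the continuity point), which the paper leaves implicit; under the standing convention (as in Alekseev--Torossian) that derivations of the completed algebra are continuous, both arguments are complete.
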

\begin{proof}
Take $D\in\overline{\sDer(\lie_2^\Q)}$. We can see $D$ as a formal sum $D=\sum_{i\geq 1}D_i$, where $D_i\in\sDer(\lie_2^\Q)_i$.
Then $D$ acts as a derivation on $\overline{\lie_2^\Q}$ as follows. Given $Z\in\overline{\lie_2^\Q}$, we can write it as $Z=\sum_{i\geq1}Z_j$, with $Z_j\in(\lie_2^\Q)_j$. Then we define $D(Z)$ as
$$
D(Z)=\sum_{i,j\geq 1}D_i(Z_j)
$$
which is a well defined element of $\overline{\lie_2^\Q}$, since for every $n$ the number of terms $D_i(Z_j)$ of degree $\leq n$ is finite.
For every $i\geq 1$ take $U_i, V_i\in(\lie_2^\Q)_i$ such that $D_i(X)=[U_i,X]$ and $D_i(Y)=[V_i,Y]$, and let $U=\sum_{i\geq1}U_i,\;V=\sum_{i\geq1}V_i\in\overline{\lie_2^\Q}$.
Then it is immediate to see that $D(X)=[U,X]$ and $D(Y)=[V,Y]$, hence $D$ is a tangential derivation of $\overline{\lie_2^\Q}$.
Observe that since $D_i(X+Y)=0$ for every $i$, we get that $D(X+Y)=0$.
Therefore we have a map $\overline{\sDer(\lie_2^\Q)}\longrightarrow\sDer(\overline{\lie_2^\Q})$. Its inverse is given as follows.
Take $D\in\sDer(\overline{\lie_2^\Q})$. Since it is a tangential derivation there exist $U=\sum_{i\geq1}U_i,\;V=\sum_{i\geq1}V_i\in\overline{\lie_2^\Q}$, $U_i,V_i\in(\lie_2^\Q)_i$ such that $D(X)=[U,X]$ and $D(Y)=[V,Y]$.
Since $D$ is special, one can easily see that $[U_i,X]+[V_i,Y]=0$ for every $i$, by inspecting $D(X+Y)$ in each degree.
Let $D_i$ be the derivation of $\lie_2^\Q$ defined by $D_i(X)=[U_i,X]$ and $D_i(Y)=[V_i,Y]$, and observe that $D_i\in\sDer(\lie_2^\Q)_i$.
Hence we get an element $D=\sum_{i\geq1}D_i\in\overline{\sDer(\lie_2^\Q)}$.
\end{proof}

\begin{prop}\label{prop:solv_sder}
The Lie ring $\sDer(\lie_2)$ is not solvable.
\end{prop}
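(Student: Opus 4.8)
The plan is to pass to coefficients in $\Q$, where the structure of special derivations is cleanest, and then to exhibit inside $\sDer(\lie_2^\Q)$ a free Lie subalgebra on two generators; non-solvability of the whole ring then follows formally. For the reduction I would use the three preceding lemmas. Since the homogeneous components of $\lie_2$ are free abelian, $\sDer(\lie_2)$ is torsion-free and embeds in $\sDer(\lie_2)\otimes\Q\cong\sDer(\lie_2^\Q)$ by Lemma \ref{lemma:sder_tensor}; as tensoring with $\Q$ is exact and commutes with the derived series, solvability of $\sDer(\lie_2)$ would force solvability of $\sDer(\lie_2^\Q)$. Hence it suffices to show that $\sDer(\lie_2^\Q)$ is not solvable. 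If it is more convenient to build elements as formal series, one may equivalently work in the completion: by Lemma \ref{lemma:solv_completion} solvability of $\sDer(\lie_2^\Q)$ is equivalent to that of $\overline{\sDer(\lie_2^\Q)}$, which by Lemma \ref{lemma:sder_completion} is identified with $\sDer(\overline{\lie_2^\Q})$.

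The engine of the argument is the fact that the free Lie algebra on two generators is not solvable: its derived series never terminates, and the derived series of a subalgebra sits inside that of the ambient algebra. So it is enough to find two homogeneous special derivations generating a free Lie subalgebra. The natural first candidates are the degree-one derivation $t$ given by $t(X)=[Y,X]$, $t(Y)=[X,Y]$ (which satisfies $t(X+Y)=0$), together with a derivation of higher degree. A short computation shows $\sDer(\lie_2)_2=0$ while $\sDer(\lie_2)_3$ is one-dimensional, spanned by the derivation $s$ with $s(X)=[[Y,[X,Y]],X]$ and $s(Y)=-[[X,[X,Y]],Y]$; in particular $s$ is not a bracket of lower-degree elements, and one checks directly that $[t,s]\neq 0$, so $t$ and $s$ generate a genuinely non-abelian subalgebra.

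To promote non-abelianness to freeness I would use the Magnus embedding $\lie_2^\Q\hookrightarrow\Q\langle X,Y\rangle$ into the free associative algebra, under which a special derivation extends to the derivation with $D(X)=UX-XU$ and $D(Y)=VY-YV$; the Lie bracket of derivations is then computed by ordinary composition, and the value on $X$ of any iterated bracket of $t$ and $s$ becomes an explicit noncommutative polynomial. Freeness of $\langle t,s\rangle$ is equivalent to the linear independence, in every degree, of the basic iterated brackets, and I would establish this by fixing a monomial order on words and checking that each basic bracket has a distinct leading word occurring with nonzero coefficient, so that no linear combination can cancel.

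The main obstacle is precisely this last step: controlling the leading words and their coefficients uniformly over all degrees, rather than degree by degree. If a clean leading-word bookkeeping for the chosen generators turns out to be delicate, I would fall back on what non-solvability literally requires, namely that the $n$-th derived subalgebra be nonzero for every $n$; for this it is enough to produce a single nonvanishing iterated bracket of each depth, and the same tensor-algebra computation, tracking one carefully chosen monomial per depth, is the crux of the argument.
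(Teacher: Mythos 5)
Your reduction to $\sDer(\lie_2^\Q)$ and, if convenient, to its degree completion is exactly the paper's (Lemmas \ref{lemma:sder_tensor}, \ref{lemma:sder_completion}, \ref{lemma:solv_completion}), and your computations that $\sDer(\lie_2)_2=0$ and that $\sDer(\lie_2)_3$ is spanned by $s$ are correct. The gap is in the engine of the argument: your two generators do not even generate a non-abelian subalgebra. The degree-one special derivation $t$ with $t(X)=[Y,X]$, $t(Y)=[X,Y]$ is the inner derivation $\operatorname{ad}(X+Y)$, and for any special derivation $D$ one has $[D,t]=\operatorname{ad}\bigl(D(X+Y)\bigr)=0$; hence $t$ is \emph{central} in $\sDer(\lie_2)$ and in particular $[t,s]=0$. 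You can also see this from your own dimension count: for $n\geq2$ one has $\dim\sDer(\lie_2^\Q)_n=2\dim(\lie_2^\Q)_n-\dim(\lie_2^\Q)_{n+1}$, which gives $\sDer(\lie_2^\Q)_4=2\cdot3-6=0$, so \emph{every} bracket of a degree-one with a degree-three element vanishes (degrees $2$ and $6$ vanish as well). The claim that $\langle t,s\rangle$ is ``genuinely non-abelian'' is therefore false, and the Magnus-embedding/leading-word analysis has nothing to apply to.

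More importantly, the difficulty you flag at the end is not a bookkeeping issue but the entire content of the proposition. After the same reduction, the paper does not attempt an explicit construction: it quotes Alekseev--Torossian, by whose Theorem 4.1 $\sDer(\overline{\lie_2^\Q})$ contains the Grothendieck--Teichm\"uller Lie algebra $\mathfrak{grt}$, and then Brown's theorem that $\mathfrak{grt}$ contains a free Lie algebra on infinitely many generators (sitting in odd degrees $3,5,7,\dots$). The freeness of those generators was the Deligne--Ihara conjecture, resolved by Brown with motivic methods; no elementary tensor-algebra computation of the kind you describe is known to produce nonvanishing iterated brackets of arbitrary depth in $\sDer(\lie_2)$, and even your fallback (one nonzero bracket per depth) would require as input essentially what Brown's theorem provides. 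If you want an explicit route, your candidates would have to start in degrees $3,5,\dots$ (e.g.\ the images of $\sigma_3,\sigma_5$), and already the nonvanishing of their iterated brackets is exactly the hard point.
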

\begin{proof}
First of all observe that $\sDer(\lie_2)$ is solvable if and only if $\sDer(\lie_2)\otimes\Q$ is solvable, where $\sDer(\lie_2)\otimes\Q\cong\sDer(\lie_2^\Q)$ by Lemma \ref{lemma:sder_tensor}.
Therefore, using Lemma \ref{lemma:sder_completion} and \ref{lemma:solv_completion} it is sufficient to show that $\sDer(\overline{\lie_2^\Q})$ is not solvable,
By \cite[Theorem 4.1]{alekseev2012kashiwara} we know that $\sDer(\overline{\lie_2^\Q})$ contains the Grothendieck-Teichmuller algebra $\mathfrak{grt}$ (see \cite[Section 4.2]{alekseev2012kashiwara}) as a Lie subalgebra. In \cite{brown2012mixed} Brown proved that $\mathfrak{grt}$ contains in turn a free Lie algebra on infinite generators (see also \cite[Section 7.2.3]{merkulov2019grothendieck}).
Therefore $\sDer(\overline{\lie_2^\Q})$ and hence $\sDer(\lie_2)$ are non-solvable.
\end{proof}

\section{String links and strongly invertible knots}\label{sec:string}

The aim of this brief section is simply to recall and clarify the definitions concerning strongly invertible knots, theta curves, string links and Milnor invariants.

We start with the definition of strongly invertible knot. Despite not being a fundamental definition in the next sections, we want to stress that all of this construction originates from the goal to investigate the non-abelianity of $\widetilde{\C}$.
\begin{defn}
    A \emph{strongly invertible knot} is a pair $(K,\rho)$ where $K\subseteq S^3$ is an oriented knot and $\rho\in \Diffeo^+(S^3)$ is an  involution such that $\rho(K)=K$ and $\rho$ reverses the orientation on $K$.
\end{defn}
\begin{defn}
    A \emph{direction} on a strongly invertible knot $(K,\rho)$ is the choice of an oriented half-axis $h$, i.e. one of the two connected components of $\Fix(\rho)\setminus K$.
\end{defn}
\begin{defn}
    We say that two DSI knots $(K_i,\rho_i,h_i)$, $i=0,1$ are \emph{equivariantly concordant} if there exists a smooth properly embedded annulus $C\cong S^1\times I\subset S^3\times I$, invariant with respect to some involution $\rho$ of $S^3\times I$ such that:
\begin{itemize}
    \item $\partial (S^3\times I,C)=(S^3,K_0)\sqcup -(S^3,K_1)$,
    \item $\rho$ is in an extension of the strong inversion $\rho_0\sqcup\rho_1$ on $S^3\times 0\sqcup S^3\times 1$,
    \item the orientations of $h_0$ and $-h_1$ induce the same orientation on the annulus $\Fix(\rho)$, and $h_0$ and $h_1$ are contained in the same component of $\Fix(\rho)\setminus C$.
\end{itemize}
\end{defn}
The \emph{equivariant concordance group} is the set $\widetilde{\C}$ of equivalence classes of directed strongly invertible knots up to equivariant concordance, endowed with the operation induced by the \emph{equivariant connected sum}, which we denote by $\widetilde{\#}$ (see \cite{sakuma,boyle2021equivariant} for details).

\begin{defn}
A \emph{labeled theta curve} is a graph $G$ with two vertices $v_1$ and $v_2$ and three edges $e_1, e_2, e_3$ joining $v_1$ and $v_2$, which are considered to be oriented from $v_1$ to $v_2$.
A \emph{(spatial) theta curve} is an embedding $\theta:G\longrightarrow S^3$.
\end{defn}

In \cite{taniyama1993cobordism} Taniyama introduced a notion of cobordism between theta curves and defined the so called \emph{cobordism group of theta curves} $\Theta$, which is a group with the operation of \emph{vertex connected sum} of theta curves.

\begin{remark}\label{remark:SI_to_theta}
Observe that there exists a natural homomorphism
$$
\pi:\widetilde{\C}\longrightarrow \Theta.
$$
In fact, we can associate with a directed strongly invertible knot $(K,\rho,h)$ the theta curve $\theta$ given by the projection in $S^3/\rho\cong S^3$ of $K\cup \Fix(\rho)$. The theta curve is naturally labeled as follows:
\begin{itemize}
    \item the vertex $v_1$ (resp. $v_2$) is the projection of the initial (resp. final) point of $h$,
    \item the edge $e_1$ is the projection of $h$,
    \item the edge $e_2$ is the projection of $K$,
    \item the edge $e_3$ is the projection of $\Fix(\rho)\setminus h$.
\end{itemize}
\end{remark}

We now proceed with the definition of \emph{string link}, introduced originally in \cite{le1988cobordisme}, which generalizes the notion of braid. These are embeddings of some copies of the interval $I=[0,1]$ in the $3$-disk, more precisely:
\begin{defn}
    A $k$-\emph{string link} is (the image of) a proper embedding:
    \[ \sigma: \coprod_{i=1}^k I_i \hookrightarrow D^2\times I \]
    such that, for some fixed $\{p_1,\dots,p_k\} \subseteq D^2 $, $\sigma|_{I_i}(0) = p_i \times 0$ and $\sigma|_{I_i}(1) = p_i \times 1$ for all $i=1,\dots,k$.
    
    The image of $I_i$ is called the \emph{i-th string} of the string link $\sigma$. We will also refer to the $i$-th string of $\sigma$ writing: $I_{\sigma,i}$.
\end{defn}
\begin{figure}
    \centering
    \includegraphics[width=0.3\linewidth]{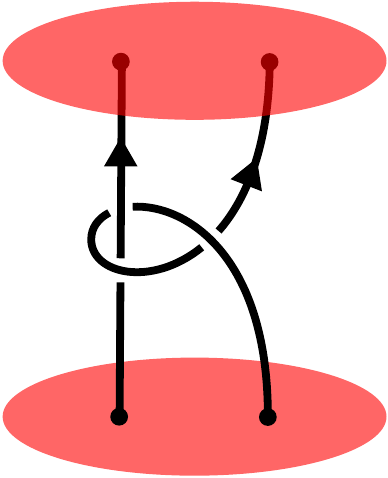}
    \caption{Example of a $2$-string link.}
    \label{fig:stringlink}
\end{figure}
See Figure~\ref{fig:stringlink} for an example. It is clear that each string of a string link inherits an orientation from the standard orientation on the interval. 

Notice that given two $k$-string links $\sigma$ and $\tau$, it is possible to define their \emph{sum} $\sigma \# \tau$. Call $(D\times I)_{\sigma}$ and $(D\times I)_{\tau}$ the two ambient spaces. It is enough to assume that the set of endpoints $\{p_1,\dots,p_k\}\subseteq D$ is shared by the two string links and then identify $(D\times \{1\})_{\sigma}$ with $(D\times \{0\})_{\tau}$.

This sum admits an identity element, given by the trivial string link, that coincides with the trivial $k$-braid. Considering the string links up to isotopies that fix the endpoints, it is easy to notice that the operation of sum is not commutative. 

Now we want to define a suitable notion of \emph{concordance} for string links (see Figure~\ref{fig:concstringlink}). 
\begin{defn}
     We say that two $k$-string links $\sigma$ and $\tau$ are \emph{concordant} if there exist $k$ properly embedded disks $\coprod_{i=1}^k (I\times I)_i \hookrightarrow D\times I\times I$ such that:
     \begin{itemize}
         \item $(I\times 0)_i = I_{\sigma,i}$ for all $i=1,\dots,k$.
         \item $(I\times 1)_i = I_{\tau,i}$ for all $i=1,\dots,k$.
         \item The string link $\coprod_{i=1}^k (0\times I)_i \hookrightarrow D\times 0 \times I$ is the trivial one.
         \item The string link $\coprod_{i=1}^k (1\times I)_i \hookrightarrow D\times 1 \times I$ is the trivial one.
     \end{itemize}
\end{defn}
\begin{figure}
    \centering
     \begin{tikzpicture}
        \draw (0,0) node[above right]{
           \includegraphics[width=0.48\linewidth]{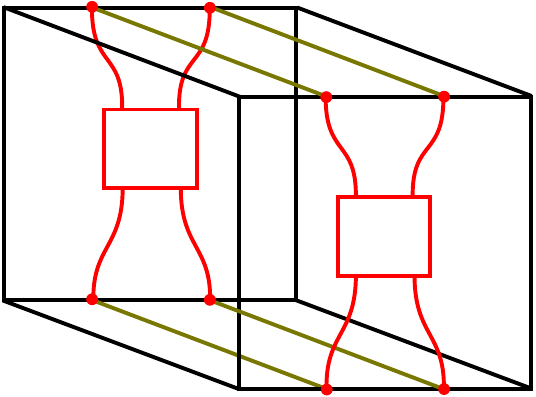}};
        \draw (2.18,3.45) node[scale=2] (a) {$\sigma$};
        \draw (5.33,2.25) node[scale=2] (a) {$\tau$};
         \end{tikzpicture}
    \caption{Concordance of two $2$-string links.}
    \label{fig:concstringlink}
\end{figure}
The concordance group $\C(k)$ of $k$-string links is defined as the set of $k$-string links up to concordance, together with the operation of sum $\#$ defined above.

As shown by Le Dimet in \cite{le1988cobordisme}, the group of pure $k$-braids $\mathcal{P}(k)$ is naturally contained in $\C(k)$ as a subgroup.

We now recall briefly the definition of Milnor invariants for string links, following \cite{habegger1998link}. We will focus only on the construction for $2$-string links, since it is the only case we will need in Section \ref{sec:prova}.

Let $\sigma$ be a $2$-string link and let $D_2=D\setminus\{p_1,p_2\}$. 
Denote by $j_0$ and $j_1$ the inclusion of $D_2$ in $D\times I\setminus\sigma$ at time $0$ and $1$ respectively.
Observe that in general $$(j_0)_*, (j_1)_*:\pi_1(D_2)\longrightarrow\pi_1(D\times I\setminus\sigma)$$ are not isomorphisms, but they induce isomorphisms on integral homology. Therefore, by Stalling's Theorem \cite{STALLINGS1965170} we have that $j_0$ and $j_1$ give isomorphisms
$$
(j_0)_*, (j_1)_*:\frac{\pi_1(D_2)}{\pi_1(D_2)_n}\longrightarrow\frac{\pi_1(D\times I\setminus\sigma)}{\pi_1(D\times I\setminus\sigma)_n}
$$
for all $n$, where $\pi_1(X)_n$ is the $n$-th term of the lower central series of $\pi_1(X)$.

Identifying $\pi_1(D_2)$ with the free group $F$ on two generators $x$ and $y$, get an automorphism $A_n(\sigma)=(j_1)_*^{-1}(j_0)_*$ of $F/F_{n+1}$.
As in \cite[Theorem 1.1]{habegger1998link} one can prove that this actually defines a surjective homomorphism
$$
A_n:\C(2)\longrightarrow\Aut_0(F/F_{n+1}),
$$
where $\Aut_0(F/F_{n+1})$ is the subgroup consisting of automorphisms which conjugate $x$ and $y$ and fix the product $xy$.

\begin{remark}\label{remark:total_milnor_invariant}
In the following we will denote by $\overline{F}$ the \emph{algebraic closure} of $F$ in its pro-nilpotent completion $\widehat{F}=\varprojlim F/F_n$ (see \cite{levine1989link} for the definition), and by $\overline{F}_*$ the filtration on $\overline{F}$ given by the lower central series.
As pointed out in \cite[Remark 1.2]{habegger1998link}, Milnor invariants for string links can be gathered to define a \emph{total Milnor invariant}
$$
A:\C(2)\longrightarrow\Aut_0(\overline{F}).
$$
Then, for every $n$ we can retrieve the homomorphism $A_n$ by composition with the projection
$$
\pi_n:\Aut_0(\overline{F})\longrightarrow\Aut_0(\overline{F}/\overline{F}_{n+1}),
$$
since we can identify $F/F_{n+1}\cong\overline{F}/\overline{F}_{n+1}.$
\end{remark}

\section{Solvability of concordance groups}\label{sec:solvab}

The aim of this section is to prove the non-solvability of multiple concordance groups. In particular, we prove that the equivariant concordance group is not solvable and we answer negatively the question regarding the sovability of $\C(n) / \left\langle \left\langle \mathcal{P}(n) \right\rangle\right\rangle$ in \cite[Conjecture 1.3]{kuzbary2023note}.

First of all, consider the following subgroup of $\C(2)$:
\[ \C_u(2) = \left\{ \sigma\in \C(2) \;|\; \text{the first component of } \sigma \text{ is unknotted} \right\}. \]
\begin{thm}\label{thm:Cu_no_sol}
The group $\C_u(2)$ is not solvable.
\end{thm}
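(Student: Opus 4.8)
The plan is to leverage the total Milnor invariant to embed (a relevant quotient of) $\C_u(2)$ into the Lie ring $\sDer(\lie_2)$, whose non-solvability is already established in Proposition~\ref{prop:solv_sder}, and then pull non-solvability back through this connection using Lemma~\ref{lemma:group_solv}. The key observation is that the homomorphism $A:\C(2)\longrightarrow\Aut_0(\overline{F})$ should interact well with the lower central series filtration. I would first set up a filtration $\C_u(2)_*$ on $\C_u(2)$ (or on all of $\C(2)$) by declaring $\C(2)_k$ to be the subgroup of string links whose total Milnor invariant $A(\sigma)$ acts trivially modulo $\overline{F}_{k+1}$, i.e. the preimage under $A$ of the subgroup $\{\alpha\in\Aut_0(\overline{F})\mid \alpha\equiv\id \bmod \overline{F}_{k+1}\}$. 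This is precisely the sort of filtration produced by Lemma~\ref{lemma:induced_filtration}, applied to the action of $\Aut_0(\overline{F})$ on $\overline{F}$ equipped with its lower central series filtration $\overline{F}_*$.

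The heart of the argument is to identify the associated graded Lie ring $\mathcal{L}(\C_u(2)_*)$ with something large inside $\sDer(\lie_2)$. For an automorphism $\alpha\in\Aut_0(\overline{F})$ that is trivial modulo $\overline{F}_{k+1}$, its "leading term" is measured by the map $g\mapsto \alpha(g)g^{-1}$, which lands in $\overline{F}_{k+1}/\overline{F}_{k+2}\cong(\lie_2)_{k+1}$; since $\alpha$ conjugates $x$ and $y$ and fixes the product $xy$, the induced element of $\Der(\lie_2)$ is tangential and kills $X+Y$, hence is special. This is exactly the classical Johnson-homomorphism picture: the associated graded of $\Aut_0(\overline{F})$ filtered by the lower central series injects into $\sDer(\lie_2)$, and on the nose one expects an isomorphism onto $\sDer(\lie_2)$ (the condition "first component unknotted" defining $\C_u(2)$ should ensure that the relevant invariants realize all of $\sDer(\lie_2)$ rather than a proper subring). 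Concretely, I would show that $A$ induces a surjection of graded Lie rings $\mathcal{L}(\C_u(2)_*)\twoheadrightarrow\sDer(\lie_2)$, invoking the surjectivity of each $A_n$ onto $\Aut_0(F/F_{n+1})$ recorded before the statement, together with Habegger--Lin type realization results for Milnor invariants of string links.

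With that surjection in hand the conclusion is immediate: if $\C_u(2)$ were solvable, then by Lemma~\ref{lemma:group_solv} the Lie ring $\mathcal{L}(\C_u(2)_*)$ would be solvable; a quotient of a solvable Lie ring is solvable, so $\sDer(\lie_2)$ would be solvable, contradicting Proposition~\ref{prop:solv_sder}. Hence $\C_u(2)$ is not solvable.

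The main obstacle will be the precise identification of the associated graded $\mathcal{L}(\C_u(2)_*)$ with (a subring surjecting onto) $\sDer(\lie_2)$. Two points require care. First, one must verify that the graded map induced by $A$ is genuinely a homomorphism of Lie rings and not merely of graded groups; this follows from the general formalism of the Johnson homomorphism $J_\phi$ recalled in the excerpt, but applying it demands checking that the filtration $\C_u(2)_*$ really is the induced filtration of Lemma~\ref{lemma:induced_filtration} and that $A$ intertwines the two actions. Second, and more delicate, is \emph{surjectivity} onto $\sDer(\lie_2)$: one needs every special tangential derivation to arise as the leading term of the Milnor invariant of some string link whose first component is unknotted. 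This is where the structure of $\C_u(2)$ (as opposed to $\C(2)$) and the realization theorem for string-link Milnor invariants must be used carefully; I would isolate this as a separate lemma, since surjectivity — rather than mere injectivity of the Johnson homomorphism — is what transfers non-solvability from $\sDer(\lie_2)$ back to the group.
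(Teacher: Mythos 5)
Your proposal follows essentially the same route as the paper: filter $\C_u(2)$ via the total Milnor invariant using Lemma~\ref{lemma:induced_filtration}, show the Johnson homomorphism $J_A:\mathcal{L}(\C_u(2)_*)\to\sDer(\lie_2)$ is surjective (the paper proves it is an isomorphism, with surjectivity coming from Lemma~\ref{lemma:surj_milnor}, which is exactly the separate realization lemma you propose to isolate, proved via Habegger--Lin together with Cochran's realization by links with unknotted components), and conclude from Lemma~\ref{lemma:group_solv} and Proposition~\ref{prop:solv_sder}. The delicate points you flag --- that the filtration agrees with $\ker(A_n)$ and that the graded map is a Lie ring morphism --- are precisely what the paper handles in Lemma~\ref{lemma:filtr_ker} and the diagram chase identifying $K_n$ with $\sDer(\lie_2)_n$.
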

In Section \ref{sec:prova} we provide a proof of this result. We now investigate the consequences of Theorem \ref{thm:Cu_no_sol}.
It is straightforward that this implies the following:
\begin{cor}\label{cor:c2_no}
    The group $\C(2)$ of the concordance classes of 2-strands string links is not solvable.
\end{cor}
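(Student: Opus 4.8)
The plan is to derive the non-solvability of $\C(2)$ directly from Theorem~\ref{thm:Cu_no_sol}, using the elementary fact that any subgroup of a solvable group is again solvable.

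First I would check that $\C_u(2)$ is a subgroup of $\C(2)$. The trivial string link has unknotted first string, so the identity belongs to $\C_u(2)$. Given two classes represented by string links with unknotted first string, their sum is represented by the stacked string link, whose first string is the connected sum of two unknotted arcs and hence again unknotted; thus $\C_u(2)$ is closed under $\#$. Likewise, the inverse of such a class is represented by the reverse-mirror string link, whose first string remains unknotted, so $\C_u(2)$ is closed under inverses. Therefore $\C_u(2)\leq\C(2)$.

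Next I would recall that if $G$ is solvable and $H\leq G$, then $H$ is solvable as well, since the derived series satisfies $H^{(n)}\subseteq G^{(n)}$ for all $n$ and the right-hand side vanishes for $n$ large. Taking the contrapositive, a group admitting a non-solvable subgroup cannot itself be solvable. Since Theorem~\ref{thm:Cu_no_sol} asserts that the subgroup $\C_u(2)$ is not solvable, it follows immediately that $\C(2)$ is not solvable.

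The argument is entirely formal once Theorem~\ref{thm:Cu_no_sol} is granted, and there is no genuine obstacle; the only point deserving a line of justification is that the defining unknottedness condition is preserved under the stacking sum and under taking inverses, so that $\C_u(2)$ is a bona fide subgroup rather than a mere subset of $\C(2)$.
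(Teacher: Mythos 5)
Your proof is correct and follows the same route as the paper, which simply notes that the corollary is immediate from Theorem~\ref{thm:Cu_no_sol} because a solvable group cannot contain a non-solvable subgroup. Your extra verification that $\C_u(2)$ is closed under sum and inverses is a reasonable bit of diligence that the paper leaves implicit.
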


Consider now $\C_0(2)<\C_u(2)$ the subgroup consisting in string link $\sigma\in\C_u(2)$ with linking number $0$ between components.
Theorem~\ref{thm:Cu_no_sol} implies easily the following:
\begin{cor}\label{cor:C0_no}
    The group $\C_0(2)$ is not solvable.
    \begin{proof}
        Observe that $\C_0(2)$ is a normal subgroup of $\C_u(2)$ and that the quotient $\C_u(2) / \C_0(2)$ is isomorphic to $\mathbb{Z}$. Hence we have the following short exact sequence:
        \[ 1 \rightarrow \C_0(2) \rightarrow \C_u(2) \rightarrow \mathbb{Z} \rightarrow 1. \]
        Since $\mathbb{Z}$ is abelian and $\C_u(2)$ is not solvable, it follows that $\C_0(2)$ cannot be solvable.
    \end{proof}
\end{cor}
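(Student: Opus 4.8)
The plan is to exhibit $\C_0(2)$ as the kernel of a surjective homomorphism $\C_u(2)\to\Z$ and then conclude via the standard stability of solvability under group extensions, combined with Theorem~\ref{thm:Cu_no_sol}.

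First I would note that the linking number between the two strings of a $2$-string link is a concordance invariant which is additive under the sum operation $\#$; hence it descends to a group homomorphism $\lk:\C_u(2)\longrightarrow\Z$. This map is surjective, since one can realize any prescribed integer linking number by inserting the appropriate number of positive or negative clasps between the two strings while leaving the first component unknotted (so that the resulting class still lies in $\C_u(2)$). By the very definition of $\C_0(2)$ we have $\C_0(2)=\ker(\lk)$, so $\C_0(2)$ is normal in $\C_u(2)$ and we obtain a short exact sequence
\[ 1\longrightarrow\C_0(2)\longrightarrow\C_u(2)\overset{\lk}{\longrightarrow}\Z\longrightarrow 1. \]

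Finally I would invoke the elementary fact that an extension of a solvable group by a solvable group is again solvable. Were $\C_0(2)$ solvable, then, since $\Z$ is abelian and hence solvable, the group $\C_u(2)$ sitting in the middle of the sequence above would itself be solvable, contradicting Theorem~\ref{thm:Cu_no_sol}. Thus $\C_0(2)$ cannot be solvable. I do not expect a genuine obstacle in this argument: all the substantive work is already contained in Theorem~\ref{thm:Cu_no_sol}, and the only point deserving a line of verification is that $\lk$ is a well-defined surjective homomorphism with kernel exactly $\C_0(2)$, which is immediate from the standard additivity and concordance-invariance of the linking number.
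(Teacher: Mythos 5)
Your proof is correct and follows essentially the same route as the paper: both identify $\C_0(2)$ as the kernel of the (surjective, additive) linking-number homomorphism $\C_u(2)\to\Z$, yielding the short exact sequence $1\to\C_0(2)\to\C_u(2)\to\Z\to1$, and then conclude from the closure of solvability under extensions together with the non-solvability of $\C_u(2)$. Your write-up merely makes explicit the verification that $\lk$ is a well-defined surjective homomorphism, which the paper leaves implicit.
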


We now need a way to deduce the non-solvability of the equivariant concordance group of strongly invertible knots $\widetilde{\C}$ from the results above. The key ingredient is the following result:
\begin{thm}\cite[Proposition 2]{de2005boundary}\label{thm:garoto}
  There is a split extension of groups:
  \[ 1 \rightarrow \mathcal{P}(2) \rightarrow \C(2) \rightarrow \Theta \rightarrow 1. \]
  Here $\mathcal{P}(2)$ is the group of pure braids in $2$ strings, $\C(2)$ are the concordance classes of $2$-strands string links and $\Theta$ is the cobordism group of theta curves.
\end{thm}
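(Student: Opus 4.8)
The plan is to realize the three groups as related by an explicit \emph{closure} operation on string links together with an inverse \emph{opening} operation on theta-curves, and then to verify exactness and splitting geometrically. First I would define the homomorphism $\Phi:\C(2)\to\Theta$ as follows: given a $2$-string link $\sigma\subset D^2\times I\subset S^3$, merge the two bottom endpoints into a single vertex $v_1$ and the two top endpoints into a single vertex $v_2$ by small arcs inside $D^2\times\{0\}$ and $D^2\times\{1\}$, take the two strings of $\sigma$ to be the edges $e_2,e_3$, and adjoin a fixed unknotted arc $e_1$ running from $v_1$ to $v_2$ along the boundary of the cylinder. This produces a spatial theta-curve $\Phi(\sigma)$. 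I would then check that $\Phi$ descends to concordance/cobordism classes (a concordance of string links caps off to a cobordism of the associated theta-curves) and that it is a homomorphism, since the stacking operation $\#$ on string links is carried, after closure, to the vertex connected sum on $\Theta$, the auxiliary edge $e_1$ being absorbed at the splicing vertex.

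Next I would establish surjectivity of $\Phi$ and the splitting simultaneously, by constructing the opening map $s:\Theta\to\C(2)$. Given a theta-curve, I ambient-isotope it so that the distinguished edge $e_1$ becomes a standard unknotted arc; since any embedded arc in $S^3$ is unknotted such an isotopy exists, and it drags $e_2\cup e_3$ to two arcs joining neighborhoods of $v_1$ and $v_2$. Removing a neighborhood of $e_1$ and splitting each vertex into two nearby endpoints exhibits $e_2\cup e_3$ as a $2$-string link $s(\theta)$, with $\Phi(s(\theta))=\theta$ by construction. The essential difficulty here is to make these choices canonical enough that $s$ is a well-defined homomorphism on cobordism classes, and not merely a set-theoretic section: I expect this to require fixing the isotopy of $e_1$ up to a controlled ambiguity and checking that distinct choices differ by a concordance, which is where much of the geometric work lies.

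Finally I would identify the kernel. The containment $\mathcal{P}(2)\subseteq\ker\Phi$ is the easy half: the generator of $\mathcal{P}(2)\cong\Z$ is a full twist of the two strands, and once the third edge $e_1$ is present one can unwind this twist by spinning the two strand-ends around the vertex $v_1$, so its closure is isotopic to the trivial theta-curve. The reverse containment $\ker\Phi\subseteq\mathcal{P}(2)$ is the crux of the theorem and the step I expect to be the main obstacle: I must show that if the closure of $\sigma$ is null-cobordant as a theta-curve, then $\sigma$ is concordant to a power of the full twist. I would approach this by feeding a null-cobordism of $\Phi(\sigma)$ back through the opening construction to produce a concordance of $\sigma$ into the image of $\mathcal{P}(2)$, the point being that the only data the closure forgets is the pure-braid twisting recorded by $\mathcal{P}(2)$. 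Combined with the centrality of $\mathcal{P}(2)$ in $\C(2)$ (which makes the kernel automatically normal) and the section $s$ above, this yields the desired split exact sequence.
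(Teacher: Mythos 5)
The paper does not prove this statement itself: it is quoted from de Campos, and the only proof-adjacent material in the paper is Remark~\ref{rmk:graziegaroto}, which describes the section $\varphi:\Theta\to\C(2)$ by deleting a tubular neighbourhood of an unknotted edge of the theta curve and reading the remaining two edges as a $2$-string link. Your closure/opening outline is the same geometric picture (up to relabelling which edge plays the role of the deleted one), so the overall strategy is the right one; but two of the load-bearing steps are left open, and one of them rests on a claim that is false as stated.

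The concrete gap is in the well-definedness of the opening map $s$. You propose to handle the ambiguity in straightening the auxiliary edge by ``checking that distinct choices differ by a concordance.'' They do not: two identifications of the complement of a tubular neighbourhood of that edge with $D^2\times I$ differ by some number of full twists of the cylinder, and string links differing by a full twist have different linking numbers between their components, hence are never concordant. The ambiguity is exactly the subgroup $\mathcal{P}(2)\cong\Z$ you are trying to exhibit as the kernel, so it cannot be isotoped or concorded away; it must be killed by a normalization. This is precisely what Remark~\ref{rmk:graziegaroto} does: the identification is \emph{uniquely determined} by requiring the linking number between the two resulting strands to be $0$. Without that normalization your $s$ is not a well-defined map into $\C(2)$ at all. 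Relatedly, the step you yourself flag as ``the crux,'' namely $\ker\Phi\subseteq\mathcal{P}(2)$, is asserted rather than proved: what is actually needed is the identity that closing and then re-opening a string link $\sigma$ returns $\sigma$ corrected by a power of the full twist $\Delta$ accounting for $\lk(\sigma)$, after which the kernel computation is formal; that identity, together with the (plausible but unproved) claim that $\Phi(\Delta)$ is the trivial theta curve, is the real content of de Campos's proposition and is not established by your sketch.
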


\begin{remark}\label{rmk:graziegaroto}
    The existence of the short exact split sequence above allows us to define the following group homomorphism:
    \[ \widetilde{\C} \overset{\pi}{\rightarrow} \Theta \overset{\varphi}{\rightarrow} \C(2).  \]
    The map $\pi$ sends a directed strongly invertible knot $(K,\rho,h)$ to its quotient theta curve as in Remark \ref{remark:SI_to_theta}. The morphism $\phi$ is the section given by Theorem~\ref{thm:garoto}. More precisely, for a theta curve $\theta\in\Theta$ we can always assume that the edge $e_3$ is unknotted and does not undercross nor overcross any other strand. Let $\mathcal{N}(e_3)$ be a tubular neighbourhood small enough not to intersect any crossing on the projection. The homomorphism $\varphi$ sends $\theta\subset S^3$ to $\theta\setminus e_3\subset S^3 \setminus \mathcal{N}(e_3)$, that we can see as a $2$-string link in $D^2\times I$ (see Figure~\ref{fig:varphi}). In order to obtain a well defined map, we must choose appropriately the identification between $S^3 \setminus \mathcal{N}(e_3)$ and $D^2\times I$, which is uniquely determined by requiring the linking number between the two strands of $\theta\setminus e_3$ to be $0$. In Figure \ref{fig:varphi} this corresponds to adding the blue compensatory twists.    

    \begin{figure}[ht]
        \centering
         \begin{tikzpicture}
          \draw (0,0) node[above right]{
        \includegraphics[width=0.7\linewidth]{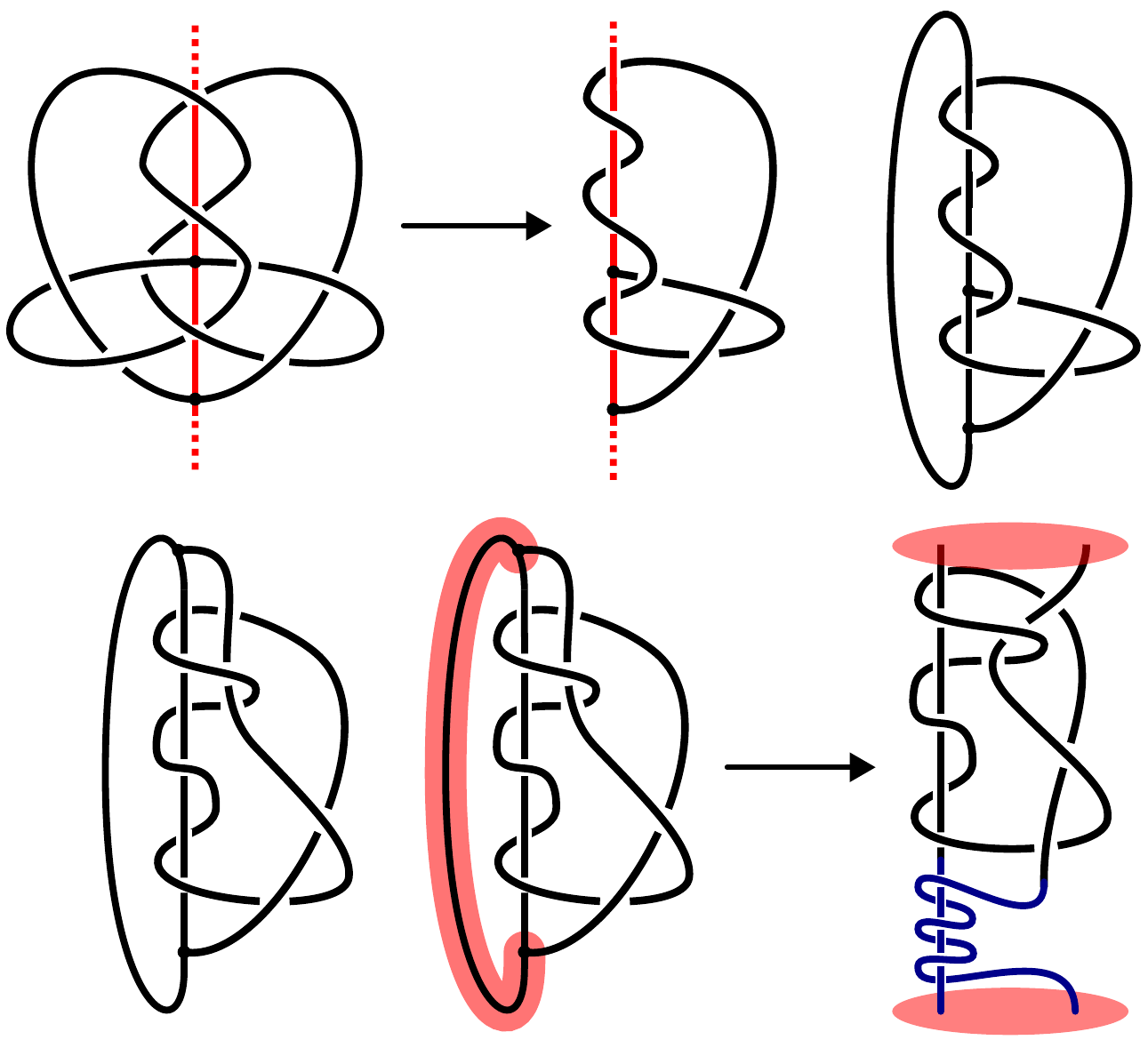}};
        \draw (4.45,8.1) node[scale=2] (a) {$\pi$};
   		\draw (7.49,3.2) node[scale=2] (a) {$\varphi$};
         \end{tikzpicture}
        \caption{An example on the homomorphisms $\pi$ and $\varphi$.}
        \label{fig:varphi}
    \end{figure}

    Observe that the image of the composition $\varphi \circ \pi$ is exactly $\C_0(2)$: 
    \begin{itemize}
        \item[-] $Im(\varphi \circ \pi) \subseteq \C_0(2)$: the image lies obviously in $\C(2)$. Since the first strand is the image of the chosen half axis of the fixed points locus of the strong inversion we also know that the image is contained in $\C_u(2)$. Moreover, by the definition of $\varphi$, the linking number between components is zero, hence $Im(\varphi \circ \pi)$ is contained in $\C_0(2)$.
        \item[-] $\C_0(2) \subseteq Im(\varphi \circ \pi)$: it is enough to glue back $\mathcal{N}(e_3)$ to obtain a theta curve. Observe that for such a theta curve the union of $e_1$ and $e_2$ form an unknot. Hence we get back a directed strongly invertible knot by considering the preimage of $e_2$ in the double cover branched over the unknot $e_1\cup e_3$.
    \end{itemize}    
\end{remark}

This remark allow us to finally show our claims.

\begin{thm}\label{thm:main}
     The equivariant concordance group of strongly invertible knots $\widetilde{\C}$ is not solvable.
\begin{proof}
 In Remark~\ref{rmk:graziegaroto} we defined a  surjective homomorphism $\widetilde{\C} \rightarrow \C_0(2)$. By Corollary~\ref{cor:C0_no} we know that $\C_0(2)$ is not solvable, therefore $\widetilde{\C}$ is not solvable since it has a non-solvable quotient.  
 \end{proof}
\end{thm}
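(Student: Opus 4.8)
The plan is to realise $\widetilde{\C}$ as a group surjecting onto one already known to be non-solvable, and then to conclude via the elementary fact that solvability is inherited by quotients. Concretely, I would take the composite homomorphism $\varphi\circ\pi\colon\widetilde{\C}\to\C(2)$ assembled in Remark~\ref{rmk:graziegaroto}: the map $\pi$ sends a directed strongly invertible knot to its quotient theta curve, and $\varphi$ is the section supplied by the split extension of Theorem~\ref{thm:garoto}. The crucial feature, established in that remark, is that the image of $\varphi\circ\pi$ is exactly the subgroup $\C_0(2)$. Hence, viewing $\C_0(2)$ as its own image, $\varphi\circ\pi$ defines a surjection $\widetilde{\C}\twoheadrightarrow\C_0(2)$.

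With this surjection in hand, I would invoke Corollary~\ref{cor:C0_no}, which asserts that $\C_0(2)$ is not solvable. The remaining step is purely formal: any quotient of a solvable group is again solvable, so a group possessing a non-solvable quotient cannot be solvable. Taking the group to be $\widetilde{\C}$ and its quotient to be $\C_0(2)$ immediately forces $\widetilde{\C}$ to be non-solvable, which is the assertion of the theorem.

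Granting the earlier results, the deduction above is essentially a one-line formal argument, so the genuine difficulty lies entirely upstream. The first real obstacle is the verification, carried out in Remark~\ref{rmk:graziegaroto}, that $\varphi\circ\pi$ is well defined and has image precisely $\C_0(2)$: this rests on normalising the theta curve so that the edge $e_3$ is unknotted and unlinked from the rest, inserting the compensatory twists that pin the linking number of the two surviving strands to $0$, and running the inverse construction that recovers a directed strongly invertible knot from a class in $\C_0(2)$ by passing to the double branched cover over $e_1\cup e_3$. The second, and deepest, obstacle is Corollary~\ref{cor:C0_no} itself, which is extracted from the short exact sequence $1\to\C_0(2)\to\C_u(2)\to\Z\to1$ together with the non-solvability of $\C_u(2)$ (Theorem~\ref{thm:Cu_no_sol}); there one uses that $\Z$ is solvable, so that non-solvability of the total group $\C_u(2)$ is forced onto the kernel $\C_0(2)$. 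The whole chain therefore reduces to Theorem~\ref{thm:Cu_no_sol}, whose proof---via Milnor invariants and the non-solvability of $\sDer(\lie_2)$---is the true heart of the matter.
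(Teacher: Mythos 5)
Your argument coincides with the paper's own proof: both use the surjection $\varphi\circ\pi\colon\widetilde{\C}\twoheadrightarrow\C_0(2)$ from Remark~\ref{rmk:graziegaroto} together with Corollary~\ref{cor:C0_no} and the fact that quotients of solvable groups are solvable. Your identification of where the real work lies (the image computation in the remark and, ultimately, Theorem~\ref{thm:Cu_no_sol}) is also accurate.
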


\begin{thm}\label{thm:main.alpha} 
         The quotient group $\C(n) / \langle\langle \mathcal{P}(n) \rangle\rangle$, of the $n$-strands string links over the normal closure of the pure $n$-braids subgroup, is not solvable for any $n\geq 2$.
         In particular, the cobordism group of theta curves $\Theta\cong\C(2) / \mathcal{P}(2)$ is not solvable.
\begin{proof}
 Notice that $\C_0(2) \cap \mathcal{P}(2) = 1$, therefore by Corollary~\ref{cor:C0_no}, we know that $\C(2)/\mathcal{P}(2)$, and hence $\Theta$, are not solvable.
 
 Observe that for $n>2$ we have an injective homomorphism $i_n:\C(2)\longrightarrow\C(n)$, which maps a $2$-string link to an $n$-string link by adding $(n-2)$ trivial strands. The homomorphism $i_n$ has a retraction $s_n:\C(n)\longrightarrow\C(2)$ given by forgetting the last $(n-2)$ strands of the string link. It is not difficult to see that the maps above induce maps on the quotients
 $$
 \C(2)/\mathcal{P}(2) \xrightarrow{\overline{i_n}} \C(n) / \langle\langle \mathcal{P}(n) \rangle\rangle\xrightarrow{\overline{s_n}}\C(2)/\mathcal{P}(2)
 $$
 which show that $\C(2)/\mathcal{P}(2)$ is a subgroup of $\C(n) / \langle\langle \mathcal{P}(n) \rangle\rangle$ for every $n\geq 2$. Therefore $\C(n) / \langle\langle \mathcal{P}(n) \rangle\rangle$ is not solvable.
\end{proof}
\end{thm}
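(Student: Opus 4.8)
The plan is to exhibit a non-solvable subgroup inside each quotient $\C(n)/\langle\langle \mathcal{P}(n)\rangle\rangle$, bootstrapping from the non-solvability of $\C_0(2)$ established in Corollary~\ref{cor:C0_no}. The whole argument reduces to two observations: first that $\Theta\cong\C(2)/\mathcal{P}(2)$ already contains a copy of $\C_0(2)$, and second that this $n=2$ quotient sits inside the quotient for every larger $n$ as a retract.

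First I would handle the base case $n=2$. The key point is that $\C_0(2)\cap\mathcal{P}(2)=1$: indeed $\mathcal{P}(2)\cong\Z$ is generated by the full twist, whose two strands have non-zero linking number, whereas every element of $\C_0(2)$ has linking number $0$ between its components by definition. Consequently the projection $\C(2)\to\C(2)/\mathcal{P}(2)$ restricts to an injective homomorphism on $\C_0(2)$, so $\C_0(2)$ embeds as a subgroup of $\C(2)/\mathcal{P}(2)$. Since $\mathcal{P}(2)$ is normal in $\C(2)$ by Theorem~\ref{thm:garoto}, the quotient $\C(2)/\mathcal{P}(2)$ is identified with $\Theta$. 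A subgroup of a solvable group is solvable, so the presence of the non-solvable subgroup $\C_0(2)$ (Corollary~\ref{cor:C0_no}) forces $\Theta\cong\C(2)/\mathcal{P}(2)$ to be non-solvable.

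For $n>2$ I would promote this to the general quotient by a stabilization--forgetting argument. Adding $(n-2)$ trivial strands defines an injection $i_n:\C(2)\to\C(n)$, and forgetting the last $(n-2)$ strands defines a retraction $s_n:\C(n)\to\C(2)$ with $s_n\circ i_n=\id$. I would then check these descend to the quotients by the (normal closures of the) pure-braid subgroups: $i_n(\mathcal{P}(2))\subset\mathcal{P}(n)\subset\langle\langle\mathcal{P}(n)\rangle\rangle$, so $i_n$ induces a map $\overline{i_n}$; and $s_n(\mathcal{P}(n))\subset\mathcal{P}(2)$, which is central, hence normal, in $\C(2)$ and so equals its own normal closure, giving the induced map $\overline{s_n}$. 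From $\overline{s_n}\circ\overline{i_n}=\id$ it follows that $\overline{i_n}$ is injective, so $\C(2)/\mathcal{P}(2)$ is a (retract, hence) subgroup of $\C(n)/\langle\langle\mathcal{P}(n)\rangle\rangle$. Non-solvability of the former, from the base case, then propagates to the latter.

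The routine parts are clear; the step I expect to require the most care is the $n>2$ reduction, specifically confirming that the forgetting map sends the whole normal closure $\langle\langle\mathcal{P}(n)\rangle\rangle$ into $\mathcal{P}(2)$, rather than merely $\mathcal{P}(n)$ into $\mathcal{P}(2)$. This rests on $\mathcal{P}(2)$ being central in $\C(2)$, so that the normal closure of $s_n(\mathcal{P}(n))\subset\mathcal{P}(2)$ cannot escape $\mathcal{P}(2)$; verifying this centrality (already noted in the introduction) is precisely what makes $\overline{s_n}$ well defined and closes the argument.
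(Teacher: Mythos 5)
Your proposal is correct and follows essentially the same route as the paper: the base case via $\C_0(2)\cap\mathcal{P}(2)=1$ together with Corollary~\ref{cor:C0_no}, and the $n>2$ case via the stabilization map $i_n$ and the forgetting retraction $s_n$ descending to the quotients. You in fact spell out more carefully than the paper why $\overline{s_n}$ is well defined (centrality of $\mathcal{P}(2)$ forcing the normal closure to stay inside it), which is a welcome clarification rather than a deviation.
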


\section{Proof of Theorem~\ref{thm:Cu_no_sol}}\label{sec:prova}
We start with two preliminary lemmas.

\begin{lemma}\label{lemma:surj_milnor}
For every $n$ the restriction of the Artin representation on $\C_u(2)$
$$
A_n:\C_u(2)\longrightarrow\Aut_0(F/F_{n+1})
$$
is surjective.
\end{lemma}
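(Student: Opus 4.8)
The plan is to deduce the statement from the surjectivity of $A_n$ on all of $\C(2)$, which is the content of \cite[Theorem 1.1]{habegger1998link} as recalled above, by showing that a realizing string link can always be arranged to have unknotted first strand without changing its image. Concretely, given a target $\alpha\in\Aut_0(F/F_{n+1})$, I would first pick $\sigma\in\C(2)$ with $A_n(\sigma)=\alpha$, and then produce a string link $\sigma'\in\C_u(2)$ with $A_n(\sigma')=\alpha$. Since $\alpha$ is arbitrary, this yields surjectivity of the restriction $A_n|_{\C_u(2)}$.

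The heart of the argument is the claim that $A_n$ cannot detect the knot type of a single strand. Recall that $A_n(\sigma)$ conjugates $x$ and $y$ by the images $\lambda_1,\lambda_2\in F/F_{n+1}$ of the two longitudes. I would compare $\sigma$ with the string link $\sigma'$ obtained by tying (or untying) a local knot $K$ into the first strand inside a ball $B$ meeting $\sigma$ only along $I_{\sigma,1}$ and disjoint from $I_{\sigma,2}$. Then $\lambda_2$ is unchanged, as the loop computing it stays outside $B$, while $\lambda_1$ is multiplied by the longitude $\lambda_K$ of $K$, read through the inclusion of the knot group $G_K=\pi_1(S^3\setminus K)$ into $\pi_1(D^2\times I\setminus\sigma)$, under which the meridian of $K$ goes to a conjugate of $x$. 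By Alexander duality the knot complement has $H_1(G_K)=\Z$ and $H_2(G_K)=0$, so Stallings' theorem forces $G_K/(G_K)_j\cong\Z$ for every $j$; as $\lambda_K$ is null-homologous it therefore lies in $(G_K)_j$ for all $j$. Since homomorphisms preserve the lower central series, the image of $\lambda_K$ lands in $\pi_1(D^2\times I\setminus\sigma)_{n+1}$ and hence is trivial in $F/F_{n+1}$. This gives $A_n(\sigma')=A_n(\sigma)$.

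To put this to use I would first localize the self-knotting of the first strand: sliding it along a thin tubular neighborhood of $I_{\sigma,1}$, which can be taken disjoint from $I_{\sigma,2}$, one isotopes $\sigma$ so that all the knotting of the first strand is confined to a ball $B$ as above, the first strand appearing as an unknotted arc connect-summed with a local copy of its knot type $K$. Untying this local knot produces $\sigma'\in\C_u(2)$, and by the previous paragraph $A_n(\sigma')=A_n(\sigma)=\alpha$, as required.

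I expect the main obstacle to be the invariance statement itself, namely verifying carefully both that the knot type of a single strand can be concentrated in a ball disjoint from the other strand — so that untying it is a genuinely local move altering $\sigma$ only inside $B$ — and that the resulting change of longitude is exactly $\lambda_K$, which is then annihilated in $F/F_{n+1}$ by the lower central series computation. A more constructive alternative that sidesteps the untying move would be to realize each $\alpha$ directly by the explicit Milnor--Habegger--Lin construction, assembling the prescribed longitudes out of iterated clasps between the two strands; such a construction never knots an individual strand, so it lands in $\C_u(2)$ automatically.
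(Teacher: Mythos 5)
Your first paragraph is essentially correct and records a standard fact: tying a local knot $K$ into one strand inside a ball disjoint from the other strand multiplies the corresponding longitude by the image of $\lambda_K$, and since $G_K/(G_K)_j\cong\Z$ for all $j$ (Stallings, using $H_1(G_K)\cong\Z$ and $H_2(G_K)=0$), the null-homologous $\lambda_K$ lies in every term of the lower central series and hence dies in $F/F_{n+1}$. The gap is in the step where you try to use this: it is not true in general that a $2$-string link can be isotoped so that all the self-knotting of the first strand is confined to a ball $B$ meeting the link only in a sub-arc of $I_{\sigma,1}$. Sliding along a tubular neighbourhood of $I_{\sigma,1}$ does not achieve this, because that neighbourhood is itself a knotted tube and the second strand may be geometrically linked with the knotted region (for instance, $I_{\sigma,2}$ can clasp through a ``loop'' of the knotted part of $I_{\sigma,1}$). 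Equivalently, your claim would assert that every $2$-string link splits as a connected sum, along the first strand, of a string link with unknotted first strand and a local knot; this fails, as one sees from the prime/JSJ decomposition of the closure. So the step you yourself flag as the main obstacle is not merely delicate but false as stated. Note also that you cannot repair it by a concordance trick: tying $-K_1$ locally into the first strand produces $K_1\#(-K_1)$, which is slice but not unknotted, so the result does not literally have unknotted first component.

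The paper closes exactly this gap by a different realization argument: it inducts on $n$ along the Habegger--Lin exact sequence $1\to K_n\to\Aut_0(F/F_{n+2})\to\Aut_0(F/F_{n+1})\to 1$, picks any $\tau\in\C(2)$ with $A_{n+1}(\tau)=\alpha\in K_n$, observes that the closure $\widehat{\tau}$ has vanishing Milnor invariants of length $\leq n$, and then invokes Cochran's theorem to produce a link $L$ \emph{with unknotted components} having the same Milnor invariants as $\widehat{\tau}$ up to length $n+1$; any string link $\sigma\in\C_u(2)$ with closure $L$ then satisfies $A_{n+1}(\sigma)=\alpha$ by the Habegger--Lin correspondence between string-link and link invariants. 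Your closing suggestion---realizing kernel elements directly by iterated clasps, which never knot an individual strand---is in the same spirit and could be made to work, but it is only mentioned, not carried out, and the main line of your argument does not go through.
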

\begin{proof}
The case $n=1$ is trivial, since $\Aut_0(F/F_2)\cong\Z$ and given $\sigma\in\C(2)$ we have that $A_1(\sigma)$ corresponds to the linking number between the components of $\sigma$.
We proceed by induction on $n$. Consider the following commutative diagram (see \cite[Theorem 1.1]{habegger1998link} for the notation and details), where the bottom row is exact:
\begin{center}
    \begin{tikzcd}
        &&\C_u(2)\ar[d,"A_{n+1}",swap]\ar[rd,"A_n"]&&\\
        1\ar[r]&K_n\ar[r]&\Aut_0(F/F_{n+2})\ar[r]&\Aut_0(F/F_{n+1})\ar[r]&1.
    \end{tikzcd}
\end{center}
Suppose by induction that the restriction of $A_n$ on $\C_u(2)$ is surjective. In order to prove the lemma it is sufficient to show that for every $\alpha\in K_n$ there exists $\sigma\in\C_u(2)$ such that $A_{n+1}(\sigma)=\alpha$.
Let $\tau\in\C(2)$ be a string link such that $A_{n+1}(\tau)=\alpha$, and denote by $\widehat{\tau}$ its closure.
Then by \cite[Lemma 3.7]{habegger1998link}, we have that all Milnor invariants of $\widehat{\tau}$ of length $\leq n$ vanish.
By \cite[Theorem 3.3]{cochran1991} we know that there exists a link $L$ with unknotted components with the same Milnor invariants of $\widehat{\tau}$ up to length $n+1$. Let now $\sigma\in\C_u(2)$ be any string link with closure $L$. It follows from \cite[Corollaries 3.6 and 3.8]{habegger1998link} that $A_{n+1}(\sigma)=A_{n+1}(\tau)$. Therefore $A_{n+1}:\C_u(2)\longrightarrow\Aut_0(F/F_{n+2})$ is surjective.
\end{proof}

Let $\{\overline{F}_k\}_{k\geq1}$ be the filtration on $\overline{F}$ (see Remark \ref{remark:total_milnor_invariant}) given by the lower central series, and let $A:\C_u(2)\longrightarrow\Aut_0(\overline{F})$ be total Milnor invariant.

Recall from Lemma \ref{lemma:induced_filtration} that $\{\overline{F}_k\}_{k\geq1}$ induces a filtration on $\C_u(2)$ via $A$, which is given by
$$
\C_u(2)_i=\{\sigma\in\C_u(2)\;|\; \forall j\geq0,\;[\sigma,\overline{F}_j]\subset\overline{F}_{i+j}\}.
$$

\begin{lemma}\label{lemma:filtr_ker}
For all $n\geq 0$, we have that $\C_u(2)_n=\ker(A_n)$.
\end{lemma}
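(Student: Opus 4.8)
The plan is to reduce both sides of the claimed equality to a single condition on the automorphism $A(\sigma)$, and then to observe that this condition is governed entirely by what happens in degree $1$. Concretely, I would first unwind $\ker(A_n)$: since $A_n=\pi_n\circ A$ with $\pi_n\colon\Aut_0(\overline{F})\to\Aut_0(\overline{F}/\overline{F}_{n+1})$, a string link $\sigma$ lies in $\ker(A_n)$ precisely when $A(\sigma)$ induces the identity on $\overline{F}/\overline{F}_{n+1}$, that is, when $A(\sigma)(x)\,x^{-1}\in\overline{F}_{n+1}$ for every $x\in\overline{F}=\overline{F}_1$. In the commutator notation of the induced filtration this reads exactly $[\sigma,\overline{F}_1]\subseteq\overline{F}_{n+1}$. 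By contrast, $\sigma\in\C_u(2)_n$ means $[\sigma,\overline{F}_j]\subseteq\overline{F}_{n+j}$ for all $j$; specializing to $j=1$ gives $\C_u(2)_n\subseteq\ker(A_n)$ immediately, so the real content is the reverse inclusion.

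For the reverse inclusion I would prove the following: if $\alpha:=A(\sigma)$ satisfies $\alpha(x)\,x^{-1}\in\overline{F}_{n+1}$ for all $x\in\overline{F}_1$, then $\alpha(y)\,y^{-1}\in\overline{F}_{n+j}$ for all $y\in\overline{F}_j$ and all $j\geq1$, which is precisely $[\sigma,\overline{F}_j]\subseteq\overline{F}_{n+j}$. This I would establish by induction on $j$, the base case $j=1$ being the hypothesis. For the inductive step, recall that by definition of the lower central series $\overline{F}_{j+1}=[\overline{F},\overline{F}_j]$ is generated by commutators $[a,b]$ with $a\in\overline{F}_1$ and $b\in\overline{F}_j$; since $\{y:\alpha(y)y^{-1}\in\overline{F}_{n+j+1}\}$ is a subgroup (using normality of $\overline{F}_{n+j+1}$), it suffices to treat such generators. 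Writing $\alpha(a)=au$ with $u\in\overline{F}_{n+1}$ and $\alpha(b)=bw$ with $w\in\overline{F}_{n+j}$ (the latter from the inductive hypothesis, after conjugating into the normal subgroup), the standard identities $[xy,z]={}^{x}[y,z]\,[x,z]$ and $[x,yz]=[x,y]\,{}^{y}[x,z]$ give $\alpha([a,b])=[au,bw]\equiv[a,b]\pmod{\overline{F}_{n+j+1}}$: the error terms lie in $[\overline{F}_{n+1},\overline{F}_j]$ and $[\overline{F}_1,\overline{F}_{n+j}]$, both contained in $\overline{F}_{n+j+1}$. This closes the induction; the case $j=0$, if counted, is vacuous since $\overline{F}_{n+1}\subseteq\overline{F}_n$, and the whole argument is uniform in $n\geq0$.

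The only genuine labor is this commutator estimate; conceptually it merely expresses the standard principle that a filtration of the form $\C_u(2)_i=\{\sigma:[\sigma,\overline{F}_j]\subseteq\overline{F}_{i+j}\ \forall j\}$ need only be tested against generators of the graded pieces of $\overline{F}_*$, which all arise from degree $1$ (equivalently, $\mathcal{L}(\overline{F}_*)$ is generated in degree $1$, as $\overline{F}/\overline{F}_m\cong F/F_m$). I expect this bookkeeping to be the main, if routine, obstacle, the remainder being a direct translation of the definitions of $A_n$ and of the induced filtration.
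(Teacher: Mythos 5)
Your proof is correct and follows essentially the same route as the paper: both reduce $\ker(A_n)$ to the degree-one condition $[\sigma,\overline{F}_1]\subseteq\overline{F}_{n+1}$ (giving one inclusion for free) and then prove the other inclusion by induction on $j$, using that $\overline{F}_{j+1}=[\overline{F}_1,\overline{F}_j]$ is generated by commutators, that the condition is stable under products, and a commutator estimate placing the error terms of $\alpha([a,b])[a,b]^{-1}$ in $[\overline{F}_{n+1},\overline{F}_j]$ and $[\overline{F}_1,\overline{F}_{n+j}]\subseteq\overline{F}_{n+j+1}$. The only cosmetic difference is that you justify closure under products via normality of $\overline{F}_{n+j+1}$, while the paper writes out the identity $[\sigma,ab]=[\sigma,a][\sigma,b][[\sigma,b]^{-1},a]$; these are the same observation.
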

\begin{proof}
Observe that since $A_n=\pi_n\circ A$, we have that $\ker(A_n)=\{\sigma\in\C_u(2)\;|\;[\sigma,\overline{F}_1]\subset\overline{F}_{n+1}\}$, and hence $\C_u(2)_n\subset\ker(A_n)$.
Viceversa, given $\sigma\in\ker(A_n)$ we prove by induction on $j\geq1$ that $[\sigma,\overline{F}_j]\subset\overline{F}_{n+j}$.
First of all, observe that given $a,b\in\overline{F}_{j+1}$ such that $[\sigma,a],[\sigma,b]\in\overline{F}_{n+j+1}$, then $[\sigma,ab]\in\overline{F}_{n+j+1}$. In fact, we can write $[\sigma,ab]=\sigma(a)\sigma(b)b^{-1}a^{-1}=[\sigma,a][\sigma,b][[\sigma,b]^{-1},a]$.
Since $\overline{F}_{j+1}=[\overline{F}_1,\overline{F}_j]$, it is sufficient to prove that given $a\in\overline{F}_1$ and $b\in\overline{F}_j$, we have $[\sigma,[a,b]]\in\overline{F}_{n+j+1}$.
An easy computation shows that $[\sigma,[a,b]]=[[\sigma,a],b][a,[\sigma,b]]$, up to elements in $\overline{F}_{n+j+2}$.
\end{proof}

Let $
\mathcal{L}(\C_u(2)_*)$ be the graded Lie ring given by the filtration on $\C_u(2)$.
Denote by $J_A:\mathcal{L}(\C_u(2)_*)\longrightarrow\Der(\lie_2)$ the Johnson morphism, where we identify $\mathcal{L}(\overline{F}_*)$ with the free Lie ring $\lie_2$ with two generators of degree $1$.

Since for every $\sigma\in\C_u(2)$, the automorphism $A(\sigma)$ acts on $\overline{F}$ by conjugating the generators $x$ and $y$ and preserving the product $xy$, we have that the image of $J_A$ is actually contained in $\sDer(\lie_2)$ (see Definition \ref{def:sder}).

\begin{proof}[Proof of Theorem \ref{thm:Cu_no_sol}]
It follows from Lemma \ref{lemma:group_solv} that it is sufficient to show that $\mathcal{L}(\C_u(2)_*)$ is not solvable.
In order to do so, we prove that the Johnson morphism $$J_A:\mathcal{L}(\C_u(2)_*)\longrightarrow\sDer(\lie_2)$$ is an isomorphism of Lie rings. 
Consider the following commutative diagram, with exact rows.
\begin{center}
    \begin{tikzcd}
        1\ar[r]&\C_u(2)_n/\C_u(2)_{n+1}\ar[r]\ar[d]&\C_u(2)/\C_u(2)_{n+1}\ar[r]\ar[d]&\C_u(2)/\C_u(2)_{n}\ar[r]\ar[d]&1\\
        1\ar[r]&K_n\ar[r]&\Aut_0(F/F_{n+2})\ar[r]&\Aut_0(F/F_{n+1})\ar[r]&1.
    \end{tikzcd}
\end{center}
Observe from the proof of \cite[Theorem 1.1]{habegger1998link} that we can identify $K_n$ with $\sDer(\lie_2)_n$. One can check that with this identification, the map $\C_u(2)_n/\C_u(2)_{n+1}\longrightarrow K_n$ coincides with the restriction of the Johnson homomorphism in degree $n$.
Finally $\C_u(2)_n/\C_u(2)_{n+1}\longrightarrow K_n$ is an isomorphism, since we know from Lemma \ref{lemma:surj_milnor} and Lemma \ref{lemma:filtr_ker} that the central and right map are isomorphism.

Finally, by Proposition \ref{prop:solv_sder} we know that $\sDer(\lie_2)$ is not solvable, therefore $\C_u(2)$ is not a solvable group.
\end{proof}

\section*{Acknowledgements}
We would like to thank Miriam Kuzbary for the helpful conversations about string links and Milnor invariants, and for her encouraging feedback.

\bibliographystyle{alpha} 
\bibliography{refs} 
\end{document}